\documentclass[pdflatex,sn-mathphys-num]{sn-jnl}

\usepackage{amsmath,amssymb,amsthm,mathtools,amsfonts}
\usepackage{graphicx}
\usepackage{multirow}
\usepackage{mathrsfs}
\usepackage[title]{appendix}
\usepackage{xcolor}
\usepackage{textcomp}
\usepackage{manyfoot}
\usepackage{booktabs}
\usepackage{algorithm}
\usepackage{lineno}
\usepackage{algorithmicx}
\usepackage{algpseudocode}
\usepackage{listings}
\usepackage{hyperref}
\usepackage{enumitem}
\usepackage[mathscr]{eucal}

\hypersetup{
  colorlinks=true,
  linkcolor=blue!55!black,
  citecolor=blue!55!black,
  urlcolor=blue!55!black
}

\newcommand{\R}{\mathbb{R}}          

\newtheorem{theorem}{Theorem}[section]
\newtheorem{proposition}[theorem]{Proposition}
\newtheorem{lemma}[theorem]{Lemma}

\theoremstyle{definition}
\newtheorem{definition}[theorem]{Definition}
\newtheorem{assumption}[theorem]{Assumption}
\newtheorem{remark}[theorem]{Remark}
\newtheorem{example}[theorem]{Example}

\numberwithin{equation}{section}

\begin{document}

\title{Reflected Optimal Stopping with a Max-Type Payoff: Measure-Valued Stopping Gains and Killed Resolvent Representation}

\author[1]{\fnm{Ye} \sur{Liang}}
\email{ye-liang@uiowa.edu}

\affil[1]{\orgdiv{College of Engineering},
\orgname{The University of Iowa},
\orgaddress{
\city{Iowa City}, 
\state{IA},
\postcode{52242},      
\country{United States}
}}

\abstract{
We study an infinite-horizon optimal stopping problem for a two-dimensional
normally reflected diffusion in the quadrant with payoff
\(G(x_1,x_2)=x_1\vee \alpha x_2\). The problem combines three features that
complicate the usual free-boundary analysis: reflection on the coordinate axes,
a genuinely two-dimensional stopping region, and a nonsmooth max-type reward.
We formulate the associated reflected obstacle problem, prove a verification
theorem under explicit It\^o--Krylov--Tanaka admissibility and
measure-superharmonicity assumptions, and derive a conditional epigraph
structure for the stopping set. The main technical point is that the
stopping-gain object \(\Gamma=c+rG-\mathcal LG\) is a signed measure rather
than a function. Its diagonal component is
   $\Gamma^\Delta(dx)
   =
   -\frac{n^\top a(x)n}{2\sqrt{1+\alpha^2}}\sigma_\Delta(dx)$,
 $n=(1,-\alpha)$,
which shows that pointwise stopping-gain sign conditions must be interpreted
with care. We also prove that the correct potential representation is the
killed-resolvent formula
   $V(x)=G(x)-R_r^{\mathcal C}\Gamma(x)$,
rather than the unrestricted reflected resolvent. A constant-coefficient
reflected Brownian example illustrates the diagonal singular term explicitly.}

\keywords{optimal stopping; 
reflected diffusion; 
obstacle problem;
viscosity solution; 
signed measure; 
killed resolvent}

\pacs[MSC Classification]{60G40; 60J60; 60J55; 35R35; 49L25}

\maketitle

\section{Introduction}
\label{sec:introduction}

\subsection{The problem}
\label{sec:intro-problem}

This paper studies the infinite-horizon optimal stopping problem
\begin{equation}
\label{eq:intro-value}
   V(x)
   =
   \sup_{\tau\in\mathcal T}
   \mathbb E_x\left[
      e^{-r\tau}G(X_\tau)
      -
      \int_0^\tau e^{-rs}c(X_s)\,ds
   \right],
   \qquad
   G(x_1,x_2)=x_1\vee\alpha x_2,
\end{equation}
where \(r>0\), \(\alpha>0\), \(c\ge0\), and \(X=(X^1,X^2)\) is a normally
reflected diffusion in the quadrant \(\R_+^2=[0,\infty)^2\). Thus \(X\) evolves
in the interior according to a second-order diffusion generator
\[
   \mathcal L f
   =
   \sum_{i=1}^2\mu_i\partial_i f
   +
   \frac12\sum_{i,j=1}^2a_{ij}\partial_{ij}f,
   \qquad a=\sigma\sigma^\top,
\]
and is pushed back into \(\R_+^2\) by normal reflection on the coordinate axes.
The problem is therefore simultaneously constrained, multidimensional, and
nonsmooth: one has reflection at the axes, a max-type payoff with a kink on
\(\Delta:=\{x_1=\alpha x_2\}\), and a genuinely two-dimensional stopping
region. These features force one to work with reflected boundary conditions,
measure-valued stopping gains, and killed rather than unrestricted potentials.

The max reward \(x_1\vee\alpha x_2\) is natural in optimal stopping problems
related to American options on the maximum of several assets, exchange-type
contracts, and dual-strike payoffs; see, for example,
\cite{broadie1997valuation,detemple2014optimal,wang2025analysis,detemple2005american,liu2026computational}. The
geometry of the associated exercise regions in the multi-asset setting was
analyzed by Villeneuve \cite{villeneuve1999exercise}, and the general
valuation and computation theory is surveyed in
\cite{karatzas1998methods,wang2025analysis1,detemple2005american,haugh2004pricing,khaliq2009new,company2023etd,poncet2022american,wang2026optimal,yu2026endogenous,yu2026fredholm}. In the present paper, the
financial interpretation is secondary. The main point is structural: the
combination of normal reflection and the nonsmooth max payoff creates a
singular stopping-gain measure and invalidates several formal free-boundary
steps that are harmless in smoother one-dimensional models.

\subsection{Related literature}
\label{sec:intro-literature}

The connection between optimal stopping, Snell envelopes, variational
inequalities, and free-boundary problems is classical; we use the monograph of
Peskir and Shiryaev \cite{peskir2006optimal} as the main reference for this
probabilistic and analytic framework, and refer to
\cite{shiryaev2008optimal,bensoussan2011applications,liu2025bidirectional,krylov1980controlled,%
oksendal2013stochastic,pham2009continuous,yu2026bilinear,wang2026vital,wang2026finite} for the broader stochastic-control
and free-boundary background. Since the value function in
\eqref{eq:intro-value} is not assumed to be smooth, the obstacle problem must be
understood in a weak sense. Our sign convention is the reflected version of
   $\max\{(\mathcal L-r)V-c,\;G-V\}=0$,
and the viscosity terminology follows the standard framework of
Crandall, Ishii, and Lions \cite{crandall1992user}; for the interplay between
viscosity solutions, obstacle problems, and reflected backward equations we
refer to \cite{fleming2006controlled,liang2025global,yu2026optimization3,reikvam1998viscosity,wang2026first,wang2026introduction,yu2026structural,el1997reflected}.
The optimal stopping of one-dimensional diffusions has a complete and elegant
theory through the concavity/excessivity characterization of Dayanik and
Karatzas \cite{dayanik2003optimal}, while genuinely multidimensional problems,
including the regularity and continuity of the stopping boundary, remain
comparatively delicate; see Peskir \cite{peskir2019continuity,yu2026optimization2} and Christensen,
Crocce, Mordecki, and Salminen \cite{christensen2019optimal,yu2026optimization1}.

The reflected nature of \(X\) is not a cosmetic boundary condition. Existence,
uniqueness, continuity in the initial state, and the form of the boundary local
time depend on the Skorokhod problem, the geometry of the domain, and the
reflection field. The foundational reflected-SDE theory goes back, in
particular, to the submartingale-problem approach of Stroock and Varadhan
\cite{stroock1971diffusion} and to the deterministic Skorokhod-map estimates of
Lions and Sznitman \cite{lions1984stochastic}, with Lipschitz continuity of the
reflection map established by Dupuis and Ishii \cite{dupuis1991lipschitz}.
Related orthant and nonsmooth-domain constructions appear in
\cite{tanaka1979stochastic,wang2026algebraic,harrison1981reflected,saisho1987stochastic,wang2026breakdown,taylor1993existence,burdzy2017obliquely,gao2022rolling,wang2026elliptic,hino2021pathwise,wei2026mckean,yu2026microscopic}. Reflected Brownian motion in an orthant, its stationary
behavior, and its boundary structure have been studied extensively; see
Harrison and Williams \cite{harrison1987multidimensional,wang2025well}, Williams
\cite{williams1987reflected}, Reiman and Williams \cite{reiman1988boundary,yu2026controlling},
Kang and Williams \cite{kang2007invariance,yu2026diagnostic}, Hobson and Rogers
\cite{hobson1993recurrence} for the quadrant case, and Lipshutz and Ramanan
\cite{lipshutz2019pathwise} for pathwise differentiability in polyhedral
domains. The
measure-theoretic language used below is motivated by the distributional
formulation of optimal stopping and potential theory: Lamberton and Zervos
\cite{lamberton2013optimal} characterize one-dimensional optimal stopping
problems through variational inequalities in the sense of distributions and
additive-functional potentials, while the Revuz correspondence and Dirichlet
form machinery provide the general language for smooth measures and continuous
additive functionals \cite{revuz2013continuous,yu2026rigorous,wang2026damage,fukushima2011dirichlet,wang2026lecture,yu2026mode,wang2026introduction2,jie2026optimal}. The
local-time and Itô--Tanaka apparatus underlying the singular part of the
stopping-gain measure is standard semimartingale calculus
\cite{revuz2013continuous,wang2025hybrid,yu2026beyond,protter2004stochastic,yu2026pattern,yu2026chemotactic,karatzas2014brownian,wang2025multi}, and the
general potential-theoretic framework for killed processes and additive
functionals may be found in Blumenthal and Getoor
\cite{blumenthal2007markov}.

\subsection{Three gaps addressed in this paper}
\label{sec:intro-gaps}

We focus on three technical points that are often hidden in formal treatments
of problems like \eqref{eq:intro-value}.

First, viscosity supersolution status is not a substitute for the Sobolev,
distributional, or additive-functional structure needed in an
It\^o--Krylov--Tanaka verification argument. A continuous function satisfying
a viscosity inequality need not, by that fact alone, possess a signed-measure
representative of \((\mathcal L-r)u-c\), nor does it automatically belong to a
class for which a generalized It\^o formula is available. We therefore state
the verification theorem under explicit measure-superharmonicity and
It\^o--Krylov--Tanaka admissibility assumptions.

Second, the stopping-gain object is a signed measure, not an ordinary function.
With the convention
   $\Gamma:=c+rG-\mathcal LG$,
the absolutely continuous part of \(\Gamma\) is obtained away from
\(\Delta=\{x_1=\alpha x_2\}\), but the kink of \(G\) creates an additional
surface measure on \(\Delta\). Writing \(Y(x):=x_1-\alpha x_2\),
\(n:=\nabla Y=(1,-\alpha)\), and \(q(x):=n^\top a(x)n\), the diagonal component
is
\[
   \Gamma^\Delta(dx)
   =
   -\frac{q(x)}{2\sqrt{1+\alpha^2}}\,\sigma_\Delta(dx),
\]
where \(\sigma_\Delta\) denotes one-dimensional surface measure on
\(\Delta\). Since \(q\ge0\), this singular component is nonpositive. Thus a
pointwise condition such as ``\(\Gamma\ge0\) on the stopping set'' is not a
literal statement about the full measure when the stopping set intersects
\(\Delta\) in positive length; the absolutely continuous and singular parts
must be separated.

Third, the potential representation must be killed at the stopping time. If
\(\mathcal D:=\{V=G\}\), \(\mathcal C:=\{V>G\}\), and
\(\tau_{\mathcal D}:=\inf\{t\ge0:X_t\in\mathcal D\}\), then the correct
pre-stopping potential is
\[
   R_r^{\mathcal C}\Gamma(x)
   :=
   \mathbb E_x\left[
      \int_0^{\tau_{\mathcal D}}e^{-rs}\,dA_s^\Gamma
   \right],
\]
where \(A^\Gamma\) is the signed continuous additive functional associated
with \(\Gamma\). The value representation is
   $V(x)=G(x)-R_r^{\mathcal C}\Gamma(x)$.
In general this is not the same as
\(G(x)-R_r^{\mathrm R}(\Gamma\mathbf 1_{\mathcal C})(x)\), because the
unrestricted reflected process continues after \(\tau_{\mathcal D}\) and may
re-enter \(\mathcal C\), accumulating occupation that is irrelevant to the
stopped problem. The optimal stopping problem sees only the path before the
first entry into \(\mathcal D\), so the resolvent must be killed at
\(\tau_{\mathcal D}\).

\subsection{Contributions}
\label{sec:intro-contributions}

The contributions of the paper are as follows.

First, we prove a verification theorem under explicit
measure-superharmonicity assumptions. The theorem separates the verification
argument from any hidden regularity theorem: the Sobolev, signed-measure,
boundary, and integrability hypotheses are named and used directly.

Second, we establish a conditional epigraph theorem for the stopping set. The
result is based on monotonicity of the stopping advantage \(H:=V-G\), not on an
unsupported order-preservation argument for \(V\) alone. Under vertical
monotonicity of \(H\) and nonempty vertical stopping sections, the stopping
region has the form \(\mathcal D=\{(x_1,x_2)\in\R_+^2:x_2\ge b(x_1)\}\);
additional horizontal monotonicity of \(H\) gives monotonicity of \(b\).

Third, we compute explicitly the singular stopping-gain measure generated by
the kink of \(G\) on \(\Delta\). The computation identifies the diagonal
surface measure
\[
   \Gamma^\Delta(dx)
   =
   -\frac{n^\top a(x)n}{2\sqrt{1+\alpha^2}}\,\sigma_\Delta(dx),
\]
and explains why the usual stopping-gain sign condition must be interpreted
with care in the presence of a max-type payoff.

Fourth, we derive the killed-resolvent representation
\(V=G-R_r^{\mathcal C}\Gamma\). This corrects the generally invalid
unrestricted-resolvent expression and makes explicit that the potential is
accumulated only before the first entry into the stopping set.

Fifth, we give a candidate-boundary verification theorem. A proposed boundary
is not validated merely by solving a boundary-trace or Fredholm-type equation;
one must verify majorization, contact, continuation-side dynamics, reflected
boundary behavior, global measure-superharmonicity, and the required
integrability assumptions. The main novelty is the systematic correction of
three common formal steps: regularity is not inferred from viscosity status,
the kink contribution is treated as a signed surface measure, and the
resolvent is killed at the optimal stopping time.

\subsection{Organization}
\label{sec:intro-organization}

Section~\ref{sec:model} introduces the reflected diffusion, the optimal
stopping problem, the obstacle convention, and the stopping-gain measure.
Section~\ref{sec:main-results} states the main results. Section~\ref{sec:proofs}
gives the proofs. Section~\ref{sec:conclusion} concludes. Technical lemmas
concerning dynamic programming, Lyapunov estimates, corner reflection, and
signed additive functionals are collected in the Appendix.
\section{The Model}
\label{sec:model}

This section fixes the probabilistic setting, the optimal stopping problem, the
obstacle convention, and the signed stopping-gain measure used throughout the
paper. All regularity, integrability, and additive-functional assumptions are
stated explicitly; technical sufficient conditions are deferred to the
Appendix.

\subsection{Reflected diffusion in the quadrant}
\label{sec:model-reflected-diffusion}

Let \(\mathbb R_+^2:=[0,\infty)^2\) and
\(\mathbb R_{++}^2:=(0,\infty)^2\). On a filtered probability space
\((\Omega,\mathcal F,(\mathcal F_t)_{t\ge0},\mathbb P)\) satisfying the usual
conditions, let \(W=(W^1,W^2)\) be a two-dimensional Brownian motion. For each
initial state \(x\in\mathbb R_+^2\), we consider the normally reflected
diffusion \(X=X^x\) solving
\begin{equation}
\label{eq:reflected-sde}
   X_t
   =
   x+\int_0^t\mu(X_s)\,ds
     +\int_0^t\sigma(X_s)\,dW_s
     +L_t,
   \qquad t\ge0,
\end{equation}
where \(L=(L^1,L^2)\) is the boundary reflection process. Normal reflection
means that, for \(i=1,2\), \(L^i\) is continuous, nondecreasing, \(L_0^i=0\),
and
\begin{equation}
\label{eq:normal-reflection}
   \int_0^\infty \mathbf 1_{\{X_t^i>0\}}\,dL_t^i=0.
\end{equation}
Thus \(dL^i\) is carried by the face \(\{x_i=0\}\). We write
\(a(x):=\sigma(x)\sigma(x)^\top\). For \(f\in C^2(\mathbb R_{++}^2)\), the
interior generator is
\[
   \mathcal L f(x)
   =
   \sum_{i=1}^2\mu_i(x)\partial_i f(x)
   +
   \frac12\sum_{i,j=1}^2a_{ij}(x)\partial_{ij}f(x),
   \qquad x\in\mathbb R_{++}^2.
\]

\begin{assumption}[Standing assumptions on the reflected diffusion]
\label{ass:reflected-diffusion}
Throughout the paper the following hypotheses are imposed.
\begin{enumerate}
\item[\textup{(A1)}]
The coefficients \(\mu:\mathbb R_+^2\to\mathbb R^2\) and
\(\sigma:\mathbb R_+^2\to\mathbb R^{2\times2}\) are locally Lipschitz and have
at most linear growth.

\item[\textup{(A2)}]
The matrix \(a=\sigma\sigma^\top\) is locally uniformly elliptic in
\(\mathbb R_{++}^2\): for every compact \(K\Subset\mathbb R_{++}^2\) there is
\(\lambda_K>0\) such that \(\xi^\top a(x)\xi\ge\lambda_K|\xi|^2\) for all
\(x\in K\) and \(\xi\in\mathbb R^2\).

\item[\textup{(A3)}]
For every \(x\in\mathbb R_+^2\), the reflected SDE
\eqref{eq:reflected-sde}--\eqref{eq:normal-reflection} admits a unique strong
solution, and the family \((X^x)_{x\in\mathbb R_+^2}\) is strong Markov.

\item[\textup{(A4)}]
The solution is continuous in the initial state: for every \(T>0\), every
compact \(K\subset\mathbb R_+^2\), and some \(q\ge1\),
\[
   \lim_{y\to x}
   \mathbb E\!\left[\sup_{0\le t\le T}|X_t^y-X_t^x|^q\right]=0,
   \qquad x\in K.
\]

\item[\textup{(A5)}]
The discount rate satisfies \(r>0\), the payoff parameter satisfies
\(\alpha>0\), and the running cost \(c:\mathbb R_+^2\to[0,\infty)\) is
continuous. Additional local regularity of \(c\) is imposed only where it is
needed.

\item[\textup{(A6)}]
The value function \(V\) defined in \eqref{eq:value-function} below is finite
and has polynomial growth: there exist constants \(C_V>0\) and \(p\ge1\) such
that \(0\le V(x)\le C_V(1+|x|^p)\) for all \(x\in\mathbb R_+^2\). The
discounted payoff and running-cost terms appearing below are assumed
integrable for the stopping times under consideration.
\end{enumerate}
\end{assumption}

\begin{remark}[Status of the reflected-SDE assumptions]
\label{rem:reflected-sde-background}
Assumption~\ref{ass:reflected-diffusion} is part of the model, not a theorem
proved in this paper. For reflected SDEs, existence, uniqueness, Markov
properties, and stability depend on the Skorokhod problem, the domain, the
reflection field, and the coefficient regularity. In the present quadrant with
normal reflection, these issues are simpler than in general nonsmooth domains
or for oblique reflection, but they are still not consequences of the displayed
SDE alone.
\end{remark}

\subsection{The optimal stopping problem}
\label{sec:model-optimal-stopping}

Let \(\mathcal T\) denote the set of all \((\mathcal F_t)\)-stopping times with
values in \([0,\infty]\). We use the convention
\(e^{-r\tau}G(X_\tau):=0\) on \(\{\tau=\infty\}\). The reward is
   $G(x_1,x_2):=x_1\vee\alpha x_2$,
 $\alpha>0$,
and the value function is
\begin{equation}
\label{eq:value-function}
   V(x)
   =
   \sup_{\tau\in\mathcal T}
   \mathbb E_x\left[
      e^{-r\tau}G(X_\tau)
      -
      \int_0^\tau e^{-rs}c(X_s)\,ds
   \right],
   \qquad x\in\mathbb R_+^2 .
\end{equation}
The stopping region, continuation region, and stopping advantage are
\[
   \mathcal D:=\{x\in\mathbb R_+^2:V(x)=G(x)\},
   \qquad
   \mathcal C:=\{x\in\mathbb R_+^2:V(x)>G(x)\},
   \qquad
   H:=V-G .
\]
Immediate stopping gives
   $V(x)\ge G(x)$, $x\in\mathbb R_+^2$,
so \(H\ge0\), \(\mathcal D=\{H=0\}\), and \(\mathcal C=\{H>0\}\). Whenever
\(V\) is continuous, \(\mathcal D\) is closed and \(\mathcal C\) is open.

\subsection{Obstacle problem and reflected viscosity convention}
\label{sec:model-obstacle}

Formally, the value function satisfies the obstacle problem
\begin{equation}
\label{eq:obstacle-max}
   \max\{(\mathcal L-r)V-c,\;G-V\}=0
   \quad\text{in }\mathbb R_{++}^2,
\end{equation}
together with the normal-reflection condition
   $\partial_iV=0$ on $\{x_i=0\}$, $i=1,2$.
The equation is understood in the reflected viscosity sense: test functions
touching from above satisfy the subsolution inequality, test functions touching
from below satisfy the supersolution inequality, and the Neumann condition is
imposed through the standard relaxed boundary condition. To avoid ambiguity at
the corner, define the active boundary index set
   $I(x):=\{i\in\{1,2\}:x_i=0\}$.
Thus \(I(x)=\varnothing\) for \(x\in\mathbb R_{++}^2\), \(I(x)=\{i\}\) on the
relative interior of the face \(\{x_i=0\}\), and \(I(0,0)=\{1,2\}\). With
\(F[\varphi](x):=\max\{(\mathcal L-r)\varphi(x)-c(x),\,G(x)-\varphi(x)\}\),
the relaxed reflected-viscosity boundary convention is encoded by
\[
   \min\!\left\{F[\varphi](x),\,\min_{i\in I(x)}\partial_i\varphi(x)\right\}
   \le0
   \quad\text{for upper tests},
\]
and
\[
   \max\!\left\{F[\varphi](x),\,\max_{i\in I(x)}\partial_i\varphi(x)\right\}
   \ge0
   \quad\text{for lower tests},
\]
with the convention that the boundary term is omitted when \(I(x)=\varnothing\).
The full formal definition and the dynamic-programming justification are
collected in Appendix~\ref{sec:appendix-dpp-viscosity}.

\begin{remark}[Sign convention]
\label{rem:obstacle-sign-convention}
The max convention \eqref{eq:obstacle-max} means that, in the continuation
region \(\mathcal C\), one expects \((\mathcal L-r)V-c=0\), while on the
stopping region \(\mathcal D\), where \(V=G\), the obstacle inequality gives
\((\mathcal L-r)G-c\le0\) in the appropriate weak sense. Equivalently, the
stopping-gain convention \(\Gamma:=c+rG-\mathcal LG\) is chosen so that the
formal stopping-region inequality becomes \(\Gamma\ge0\) away from singular
sets.
\end{remark}

\begin{remark}[Displayed sign check for the obstacle convention]
\label{rem:displayed-obstacle-sign-check}
The max-form obstacle convention used in \eqref{eq:obstacle-max} is
   $\max\{(\mathcal L-r)V-c,\;G-V\}=0$.
Thus, in the continuation set \(\mathcal C=\{V>G\}\), one has \(G-V<0\).
Since the maximum of the two terms is zero, the first term must be zero:
\[
   x\in\mathcal C
   \quad\Longrightarrow\quad
   G(x)-V(x)<0,
   \qquad
   \max\{(\mathcal L-r)V(x)-c(x),\,G(x)-V(x)\}=0,
\]
and therefore
   $(\mathcal L-r)V(x)-c(x)=0$,
 $x\in\mathcal C\cap\mathbb R_{++}^2$.
Equivalently, in the continuation region,
   $\mathcal LV(x)-rV(x)=c(x)$.
On the stopping set \(\mathcal D=\{V=G\}\), the obstacle term vanishes:
\(G-V=0\). Substituting \(V=G\) formally into the obstacle inequality gives
\[
   x\in\mathcal D
   \quad\Longrightarrow\quad
   \max\{(\mathcal L-r)G(x)-c(x),\,0\}=0,
\]
and hence
$(\mathcal L-r)G(x)-c(x)\le0$,
$x\in\mathcal D\cap(\mathbb R_{++}^2\setminus\Delta)$,
where the restriction away from \(\Delta=\{x_1=\alpha x_2\}\) is needed because
\(G\) is only piecewise \(C^2\).
This explains the sign convention for the stopping-gain measure. Away from
\(\Delta\),
   $\Gamma
   :=
   c+rG-\mathcal LG
   =
   -\big[(\mathcal L-r)G-c\big]$.
Therefore, on smooth parts of the stopping set,
\[
   (\mathcal L-r)G-c\le0
   \quad\Longleftrightarrow\quad
   -\big[(\mathcal L-r)G-c\big]\ge0
   \quad\Longleftrightarrow\quad
   \Gamma\ge0.
\]
Equivalently, in the two smooth regions
   $\mathcal R_1:=\{x_1>\alpha x_2\}$,
   $\mathcal R_2:=\{x_1<\alpha x_2\}$,
one has
\[
   G=x_1,\quad \mathcal LG=\mu_1,\quad
   \Gamma=c+rx_1-\mu_1
   \qquad \text{on }\mathcal R_1,
\]
and
\[
   G=\alpha x_2,\quad \mathcal LG=\alpha\mu_2,\quad
   \Gamma=c+r\alpha x_2-\alpha\mu_2
   \qquad \text{on }\mathcal R_2.
\]
Thus the formal stopping-region sign condition is
   $c(x)+rx_1-\mu_1(x)\ge0$ on $\mathcal D\cap\mathcal R_1$,
and
   $c(x)+r\alpha x_2-\alpha\mu_2(x)\ge0$
   on $\mathcal D\cap\mathcal R_2$.
At the kink \(\Delta\), this pointwise computation is no longer valid because
\(G\notin C^2\). Instead,
\[
   \Gamma=\Gamma^{\rm ac}\,dx+\Gamma^\Delta,
   \qquad
   \Gamma^\Delta(dx)
   =
   -\frac{q(x)}{2\sqrt{1+\alpha^2}}\,\sigma_\Delta(dx),
   \qquad
   q(x)=n^\top a(x)n,\quad n=(1,-\alpha).
\]
Hence the notation ``\(\Gamma\ge0\) on \(\mathcal D\)'' is meaningful only for
the absolutely continuous part away from \(\Delta\), or after explicitly
separating the diagonal singular component.
\end{remark}

\begin{remark}[Reflected It\^o formula and the Neumann boundary term]
\label{rem:reflected-ito-boundary-term}
Let \(f\in C^2(\mathbb R_+^2)\). For the normally reflected diffusion
\[
   dX_t=\mu(X_t)\,dt+\sigma(X_t)\,dW_t+dL_t,
   \qquad
   dL_t=(dL_t^1,dL_t^2),
\]
It\^o's formula gives
\[
\begin{aligned}
   df(X_t)
   &=
   \sum_{i=1}^2\partial_i f(X_t)\,dX_t^i
   +\frac12\sum_{i,j=1}^2\partial_{ij}f(X_t)\,d\langle X^i,X^j\rangle_t  \\
   &=
   \mathcal L f(X_t)\,dt
   +\nabla f(X_t)\sigma(X_t)\,dW_t
   +\partial_1f(X_t)\,dL_t^1
   +\partial_2f(X_t)\,dL_t^2 ,
\end{aligned}
\]
where
\[
   \mathcal L f(x)
   =
   \sum_{i=1}^2\mu_i(x)\partial_i f(x)
   +
   \frac12\sum_{i,j=1}^2a_{ij}(x)\partial_{ij}f(x),
   \qquad
   a=\sigma\sigma^\top .
\]
Equivalently, after discounting,
\begin{equation}
\label{eq:reflected-ito-discounted}
\begin{aligned}
   d\big(e^{-rt}f(X_t)\big)
   &=
   e^{-rt}\big[(\mathcal L-r)f\big](X_t)\,dt
   +e^{-rt}\nabla f(X_t)\sigma(X_t)\,dW_t  \\
   &\quad
   +e^{-rt}\partial_1f(X_t)\,dL_t^1
   +e^{-rt}\partial_2f(X_t)\,dL_t^2 .
\end{aligned}
\end{equation}
Integrating \eqref{eq:reflected-ito-discounted} over \([0,t]\) yields
\[
\begin{aligned}
   e^{-rt}f(X_t)
   &=
   f(x)
   +\int_0^t e^{-rs}\big[(\mathcal L-r)f\big](X_s)\,ds
   +\int_0^t e^{-rs}\nabla f(X_s)\sigma(X_s)\,dW_s  \\
   &\quad
   +\sum_{i=1}^2
      \int_0^t e^{-rs}\partial_i f(X_s)\,dL_s^i .
\end{aligned}
\]
The last term is the reflection contribution. Since \(L^i\) increases only on
the \(i\)-th face, i.e.
   $\int_0^\infty \mathbf 1_{\{X_s^i>0\}}\,dL_s^i=0$, $i=1,2$,
we have, for each \(t\ge0\),
\[
\begin{aligned}
   \int_0^t e^{-rs}\partial_i f(X_s)\,dL_s^i
   &=
   \int_0^t e^{-rs}\partial_i f(X_s)
      \mathbf 1_{\{X_s^i=0\}}\,dL_s^i  +
   \int_0^t e^{-rs}\partial_i f(X_s)
      \mathbf 1_{\{X_s^i>0\}}\,dL_s^i       \\
   &=
   \int_0^t e^{-rs}\partial_i f(X_s)
      \mathbf 1_{\{X_s^i=0\}}\,dL_s^i .
\end{aligned}
\]
Therefore, if \(f\) satisfies the normal-reflection condition
   $\partial_i f(x)=0$ for $x_i=0$, $i=1,2$,
then
\[
   \int_0^t e^{-rs}\partial_i f(X_s)\,dL_s^i=0,
   \qquad i=1,2,
\]
and the reflected It\^o formula reduces to
\[
   e^{-rt}f(X_t)
   =
   f(x)
   +\int_0^t e^{-rs}\big[(\mathcal L-r)f\big](X_s)\,ds
   +\int_0^t e^{-rs}\nabla f(X_s)\sigma(X_s)\,dW_s .
\]
This computation explains why the Neumann trace \(\partial_i f=0\) on
\(\{x_i=0\}\) is the correct analytic counterpart of normal reflection. It is
also the calculation used in the Lyapunov estimate: if
\(\partial_i\Psi=0\) on \(\{x_i=0\}\), then the reflection term in the formula
for \(e^{-rt}\Psi(X_t)\) vanishes, leaving only the drift
\((\mathcal L-r)\Psi\) and the martingale part.
\end{remark}

\subsection{The stopping-gain measure}
\label{sec:model-stopping-gain}

The key object used in the potential representation is the stopping-gain
measure
   $\Gamma:=c+rG-\mathcal LG$.
Since \(G(x_1,x_2)=x_1\vee\alpha x_2\) is not \(C^2\) across the kink set
   $\Delta:=\{(x_1,x_2)\in\mathbb R_+^2:x_1=\alpha x_2\}$,
\(\Gamma\) is not an ordinary function. It is a signed Radon measure with an
absolutely continuous part away from \(\Delta\) and a singular surface part on
\(\Delta\).

Define
\[
   Y(x):=x_1-\alpha x_2,
   \qquad
   n:=\nabla Y=(1,-\alpha),
   \qquad
   q(x):=n^\top a(x)n
        =a_{11}(x)-2\alpha a_{12}(x)+\alpha^2a_{22}(x).
\]
Since \(a(x)\) is nonnegative definite, \(q(x)\ge0\). On the two smooth
regions
   $\mathcal R_1:=\{x\in\mathbb R_+^2:x_1>\alpha x_2\}$,
   $\mathcal R_2:=\{x\in\mathbb R_+^2:x_1<\alpha x_2\}$,
the payoff is affine, namely \(G=x_1\) on \(\mathcal R_1\) and
\(G=\alpha x_2\) on \(\mathcal R_2\). Hence the absolutely continuous part of
\(\Gamma\) is
\[
   \Gamma^{\rm ac}(x)
   =
   \begin{cases}
   c(x)+rx_1-\mu_1(x), & x\in\mathcal R_1,\\[2mm]
   c(x)+r\alpha x_2-\alpha\mu_2(x), & x\in\mathcal R_2.
   \end{cases}
\]
The singular diagonal component is
\begin{equation}
\label{eq:Gamma-delta}
   \Gamma^\Delta(dx)
   =
   -\dfrac{q(x)}{2\sqrt{1+\alpha^2}}\,\sigma_\Delta(dx),
\end{equation}
where \(\sigma_\Delta\) denotes one-dimensional surface measure on
\(\Delta\). Thus the full decomposition is
\begin{equation}
\label{eq:Gamma-decomposition}
   \Gamma=\Gamma^{\rm ac}\,dx+\Gamma^\Delta .
\end{equation}
The proof of \eqref{eq:Gamma-delta} is given in
Section~\ref{sec:proof-singular-measure} using Tanaka's formula for
\(Y(X)=X^1-\alpha X^2\), the identity
\(G=(x_1+\alpha x_2+|Y|)/2\), and the co-area conversion
\(\delta_0(Y(x))\,dx=(1/\sqrt{1+\alpha^2})\,\sigma_\Delta(dx)\).

\begin{remark}[Why \(\Gamma\) must be treated as a measure]
\label{rem:Gamma-measure-warning}
The sign of the diagonal component is fixed by
\eqref{eq:Gamma-delta}: since \(q\ge0\), one has \(\Gamma^\Delta\le0\) as a
signed measure. Therefore the formal stopping-gain condition
\(\Gamma\ge0\) cannot be imposed literally on the full measure whenever the
stopping set contains a positive-length portion of \(\Delta\). In all later
statements, positivity or sign conditions involving \(\Gamma\) must specify
whether they concern the absolutely continuous part away from \(\Delta\), the
full signed measure, or a decomposition in which the diagonal component is
handled separately.
\end{remark}
\section{Main Results}
\label{sec:main-results}

This section states the main results of the paper. The proofs are postponed to
Section~\ref{sec:proofs}. The guiding point is that each formal step in the
optimal stopping calculation is stated with its necessary hypotheses: the
verification theorem uses measure-superharmonicity, the geometry theorem uses
monotonicity of \(H=V-G\), the kink of \(G\) is treated as a signed surface
measure, and the potential representation is killed at the first entry into
the stopping set.

\subsection{Verification under measure superharmonicity}
\label{sec:main-verification}

The first result is a verification theorem. Its purpose is not to prove
regularity of a viscosity solution, but to show that a sufficiently regular
majorant of the payoff is the value function once it satisfies the continuation
equation, the global measure-superharmonic inequality, the reflected boundary
condition, and the required integrability properties.

\begin{assumption}[It\^o--Krylov--Tanaka admissibility]
\label{ass:ito-krylov-admissibility}
Let \(u:\mathbb R_+^2\to\mathbb R\) be continuous and of polynomial growth.
We say that \(u\) is It\^o--Krylov--Tanaka admissible for the reflected
diffusion \(X\) if the following hold.
\begin{enumerate}
\item[\textup{(I1)}]
The distribution \(\mu_u:=(\mathcal L-r)u-c\), initially defined on
\(\mathbb R_{++}^2\), extends to a signed Radon measure on
\(\mathbb R_{++}^2\). Its Jordan decomposition
\(\mu_u=\mu_u^+-\mu_u^-\) consists of smooth measures for \(X\), and the
associated continuous additive functional is
\(A^{\mu_u}:=A^{\mu_u^+}-A^{\mu_u^-}\).

\item[\textup{(I2)}]
For every relatively open set \(U\subseteq\mathbb R_+^2\), every stopping time
\(\eta\), and every localizing sequence \((\eta_n)\), the stopped
It\^o--Krylov--Tanaka formula holds on
\([0,\eta\wedge\eta_n]\):
\[
   e^{-r(\eta\wedge\eta_n)}u(X_{\eta\wedge\eta_n})
   =
   u(x)
   +\int_0^{\eta\wedge\eta_n}e^{-rs}c(X_s)\,ds
   +\int_0^{\eta\wedge\eta_n}e^{-rs}\,dA_s^{\mu_u}
   +B_{\eta\wedge\eta_n}^u
   +M_{\eta\wedge\eta_n}^u,
\]
where \(M^u\) is a local martingale and \(B^u\) is the reflected boundary
contribution.

\item[\textup{(I3)}]
The reflected boundary contribution is nonpositive in the supermartingale
calculation: for every admissible localization,
\(\mathbb E_x[B_{\eta\wedge\eta_n}^u]\le0\). In particular, this holds if
\(u\) satisfies the normal-reflection condition \(\partial_i u=0\) on
\(\{x_i=0\}\), \(i=1,2\), in the relevant trace sense.

\item[\textup{(I4)}]
The stopped martingale terms have zero expectation after localization, and the
uniform-integrability conditions needed to pass \(n\to\infty\) are satisfied
for \(u(X)\), \(G(X)\), \(c(X)\), and \(A^{\mu_u}\).
\end{enumerate}
\end{assumption}

\begin{theorem}[Verification under measure superharmonicity]
\label{thm:verification-measure}
Let \(\mathcal O\subseteq\mathbb R_+^2\) be relatively open and set
\(\tau_{\mathcal O^c}:=\inf\{t\ge0:X_t\notin\mathcal O\}\). Let
\(u:\mathbb R_+^2\to\mathbb R\) be continuous and of polynomial growth.
Assume that:
\begin{enumerate}
\item[\textup{(V1)}]
\(u\ge G\) on \(\mathbb R_+^2\).

\item[\textup{(V2)}]
\(u=G\) on \(\mathbb R_+^2\setminus\mathcal O\).

\item[\textup{(V3)}]
For some \(p>2\), \(u\in W^{2,p}_{\rm loc}(\mathcal O\cap\mathbb R_{++}^2)\)
and \((\mathcal L-r)u-c=0\) a.e. in
\(\mathcal O\cap\mathbb R_{++}^2\).

\item[\textup{(V4)}]
The distribution \(\mu_u:=(\mathcal L-r)u-c\) extends to a signed Radon
measure on \(\mathbb R_{++}^2\) satisfying
\[
   \mu_u\le0
   \quad\text{in the sense of measures, i.e.}\quad
   \langle \mu_u,\phi\rangle\le0
   \quad\forall\,\phi\in C_c^\infty(\mathbb R_{++}^2),\ \phi\ge0 .
\]

\item[\textup{(V5)}]
\(u\) is It\^o--Krylov--Tanaka admissible in the sense of
Assumption~\ref{ass:ito-krylov-admissibility}.
\end{enumerate}
Then \(u\ge V\) on \(\mathbb R_+^2\). If, in addition,
\(\tau_{\mathcal O^c}<\infty\) \(\mathbb P_x\)-a.s. for every
\(x\in\mathbb R_+^2\), the stopped formula is exact on
\([0,\tau_{\mathcal O^c}]\), and the terminal passage to
\(\tau_{\mathcal O^c}\) is justified, then \(u=V\) and
\(\tau_{\mathcal O^c}\) is optimal.
\end{theorem}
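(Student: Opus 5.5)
The proof is a standard supermartingale argument built on the stopped It\^o--Krylov--Tanaka formula (I2), in two halves: an upper bound $u\ge V$ valid for every stopping time, and equality along $\tau_{\mathcal O^c}$.

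\textbf{Upper bound.} Fix $x\in\mathbb R_+^2$, an arbitrary $\tau\in\mathcal T$, and the localizing sequence $(\eta_n)$ from (I2). Apply (I2) with $\eta=\tau$ and rearrange:
\[
   e^{-r(\tau\wedge\eta_n)}u(X_{\tau\wedge\eta_n})
   -\int_0^{\tau\wedge\eta_n}e^{-rs}c(X_s)\,ds
   =u(x)+\int_0^{\tau\wedge\eta_n}e^{-rs}\,dA^{\mu_u}_s
   +B^u_{\tau\wedge\eta_n}+M^u_{\tau\wedge\eta_n}.
\]
The key observation is that (V4) says $\mu_u\le0$ as a Radon measure. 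Hence $\mu_u^+=0$ in the Jordan decomposition of (I1), and $A^{\mu_u}=-A^{\mu_u^-}$ is nonincreasing. The $dA$ integral is therefore pathwise nonpositive. Taking expectations, (I4) kills the martingale term and (I3) gives $\mathbb E_x[B^u]\le0$. This yields
\[
   \mathbb E_x\Big[e^{-r(\tau\wedge\eta_n)}u(X_{\tau\wedge\eta_n})-\int_0^{\tau\wedge\eta_n}e^{-rs}c(X_s)\,ds\Big]\le u(x).
\]
Now use (V1), $u\ge G$, to replace $u$ by $G$ on the left. Let $n\to\infty$: the uniform integrability in (I4), together with continuity of $u$ and $G$ and the polynomial growth in (A6), gives convergence of the payoff term. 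The cost term converges by monotone convergence, since $c\ge0$.

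One point needs care: the convention $e^{-r\tau}G(X_\tau)=0$ on $\{\tau=\infty\}$. I would first treat $\tau\wedge T$ and then let $T\to\infty$. This requires $e^{-rT}G(X_T)\to0$ in $L^1$, which I would extract from the UI hypothesis (I4), or from a Lyapunov bound on $\mathbb E_x|X_T|^p$. Taking the supremum over $\tau$ then gives $u(x)\ge V(x)$.

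\textbf{Equality.} Set $\tau^*=\tau_{\mathcal O^c}$, which is finite a.s. by hypothesis. On $[0,\tau^*)$ the path stays in $\mathcal O$. By (V3), $\mu_u$ restricted to $\mathcal O\cap\mathbb R_{++}^2$ vanishes: it is absolutely continuous there because $u\in W^{2,p}_{\rm loc}$ with $p>2$, so Krylov's estimate applies, and its density is zero a.e. Hence $A^{\mu_u}$ does not charge $[0,\tau^*)$. The step ``the stopped formula is exact on $[0,\tau_{\mathcal O^c}]$'' is read as saying the boundary term also has zero expectation there, so every inequality above becomes an equality. Since $X_{\tau^*}\in\mathcal O^c$ by closedness of $\mathcal O^c$ and path continuity, (V2) gives $u(X_{\tau^*})=G(X_{\tau^*})$. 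The assumed terminal passage then yields
\[
   u(x)=\mathbb E_x\Big[e^{-r\tau^*}G(X_{\tau^*})-\int_0^{\tau^*}e^{-rs}c\,ds\Big]\le V(x).
\]
Combined with the first half, $u=V$ and $\tau^*$ is optimal.

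\textbf{Main obstacle.} The delicate step is the claim that $A^{\mu_u}$ gives no mass on $[0,\tau^*)$. Part of the measure $\mu_u$ might live on $\partial\mathcal O\cap\mathbb R_{++}^2$, or the process might spend time on the axes inside $\mathcal O$. Smoothness of $\mu_u^-$ (I1) ensures that $\mu_u$-null sets, in particular $\partial\mathcal O$ restricted to its pre-$\tau^*$ visits, carry no additive functional. In the first attempt I would handle this through the Revuz correspondence, using the fact that the fine support of $A^{\mu_u^-}$ is contained in the support of $\mu_u^-$, which lies outside $\mathcal O$. Time spent on the axes within $\mathcal O$ is absorbed into $B^u$ and controlled by the exactness hypothesis.
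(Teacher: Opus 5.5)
Your proposal is correct and follows essentially the same route as the paper: apply the stopped It\^o--Krylov--Tanaka formula along a localization of $\tau$, and use $\mu_u\le0$, (I3) and (I4) to get the supermartingale inequality; then replace $u$ by $G$ via (V1), pass to the limit and take the supremum, and finally obtain equality on $[0,\tau_{\mathcal O^c}]$ from (V3) (no defect before exit) and (V2) at the exit point. Your additional details make explicit what the paper asserts directly: the pathwise sign of $A^{\mu_u}=-A^{\mu_u^-}$, the treatment of $\{\tau=\infty\}$ (where Fatou with $G\ge0$ would already suffice), and the Revuz-support argument that $A^{\mu_u^-}$ does not charge $[0,\tau_{\mathcal O^c})$.
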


\begin{remark}[Diagonal compatibility obstruction in the verification theorem]
\label{rem:diagonal-compatibility-obstruction}
The hypotheses \(u=G\) on \(\mathbb R_+^2\setminus\mathcal O\) and
\(\mu_u:=(\mathcal L-r)u-c\le0\) as a signed measure impose a genuine
restriction along the kink set \(\Delta=\{x_1=\alpha x_2\}\). Indeed, on the
interior stopping side \(\mathbb R_{++}^2\setminus\mathcal O\), the contact
condition gives \(u=G\), hence
   $\mu_u
   =
   (\mathcal L-r)u-c
   =
   (\mathcal L-r)G-c
   =
   -\Gamma$.
Since the diagonal part of the stopping-gain measure is
\[
   \Gamma^\Delta(dx)
   =
   -\frac{q(x)}{2\sqrt{1+\alpha^2}}\,\sigma_\Delta(dx),
   \qquad
   q(x)=n^\top a(x)n,\quad n=(1,-\alpha),
\]
the diagonal singular part of \(\mu_u\) on the contact side is
\[
   \mu_u^\Delta(dx)
   =
   -\Gamma^\Delta(dx)
   =
   \frac{q(x)}{2\sqrt{1+\alpha^2}}\,\sigma_\Delta(dx)\ge0.
\]
However, the verification theorem assumes
   $\mu_u\le0$ as a signed measure on $\mathbb R_{++}^2$.
Therefore, if \(E:=(\mathbb R_{++}^2\setminus\mathcal O)\cap\Delta\), the
positive diagonal contribution must vanish:
\[
   0
   \ge
   \mu_u^\Delta(E)
   =
   \frac{1}{2\sqrt{1+\alpha^2}}
   \int_E q(x)\,\sigma_\Delta(dx)
   \ge0,
\]
and hence
\begin{equation}
\label{eq:diagonal-compatibility-condition}
   \int_{(\mathbb R_{++}^2\setminus\mathcal O)\cap\Delta}
      q(x)\,\sigma_\Delta(dx)=0 .
\end{equation}
Under local uniform ellipticity in \(\mathbb R_{++}^2\), one has, locally away
from the coordinate axes,
\[
   q(x)=n^\top a(x)n>0,
   \qquad x\in\mathbb R_{++}^2,\quad n=(1,-\alpha)\ne0.
\]
Consequently, on every compact subset \(K\Subset\mathbb R_{++}^2\),
\[
   \int_{K\cap(\mathbb R_{++}^2\setminus\mathcal O)\cap\Delta}
      q(x)\,\sigma_\Delta(dx)=0
   \quad\Longleftrightarrow\quad
   \sigma_\Delta\!\left(
      K\cap(\mathbb R_{++}^2\setminus\mathcal O)\cap\Delta
   \right)=0 .
\]
Thus, away from the axes, the verification theorem is compatible with the
diagonal singular measure only when the contact side
\(\mathbb R_{++}^2\setminus\mathcal O\) intersects \(\Delta\) in zero
\(\sigma_\Delta\)-measure. Equivalently, the kink set \(\Delta\) must lie in
the continuation region up to a \(q\,\sigma_\Delta\)-null set.
This condition is not an additional free-boundary theorem. It is the
measure-theoretic compatibility requirement forced by the identity
   $(\mathcal L-r)G-c=-\Gamma$
and by the nonpositive diagonal component of \(\Gamma\). Without
\eqref{eq:diagonal-compatibility-condition}, the assumptions
\(u=G\) on the stopping side and \(\mu_u\le0\) would contradict the positive
singular part
\[
   \mu_u^\Delta(dx)
   =
   \frac{q(x)}{2\sqrt{1+\alpha^2}}\,\sigma_\Delta(dx).
\]
\end{remark}

\begin{remark}[No regularity is inferred from viscosity status]
\label{rem:main-no-hidden-regularity}
Theorem~\ref{thm:verification-measure} is deliberately a verification theorem,
not a regularity theorem. The hypotheses \(u\in W^{2,p}_{\rm loc}\), the
signed-measure extension of \((\mathcal L-r)u-c\), the reflected boundary
control, and the stopped It\^o--Krylov--Tanaka formula are substantive
assumptions. They are not consequences of being a continuous viscosity
supersolution.
\end{remark}

\begin{remark}[Compatibility of \textup{(V2)} and \textup{(V4)} on the diagonal]
\label{rem:main-V2-V4-compatibility}
Hypotheses \textup{(V2)} and \textup{(V4)} interact on \(\Delta\). On
\(\mathbb R_{++}^2\setminus\mathcal O\) one has \(u=G\), hence
\(\mu_u=(\mathcal L-r)G-c=-\Gamma\), whose singular part is
\(-\Gamma^\Delta(dx)=\dfrac{q(x)}{2\sqrt{1+\alpha^2}}\,\sigma_\Delta(dx)\ge0\).
Since \textup{(V4)} requires \(\mu_u\le0\) as a measure, the two hypotheses are
jointly satisfiable on a set that charges \(\Delta\) only if
   $\sigma_\Delta\bigl((\mathbb R_{++}^2\setminus\mathcal O)\cap\Delta\bigr)=0$ (equivalently  $q\equiv0$ there).
In other words, Theorem~\ref{thm:verification-measure} applies to
configurations in which the diagonal lies in the continuation set \(\mathcal O\)
up to a \(\sigma_\Delta\)-null set. In that case the diagonal surface measure
contributes only to the killed potential \(R_r^{\mathcal C}\Gamma\) of
Section~\ref{sec:main-killed-resolvent}, and not to the superharmonic defect on
the stopping set. This is consistent with
Remark~\ref{rem:Gamma-measure-warning}.
\end{remark}

\subsection{Conditional geometry of the stopping set}
\label{sec:main-geometry}

The next result identifies when the stopping set is an epigraph. This is not a
probabilistic monotonicity theorem. It is a deterministic geometric consequence
of monotonicity of \(H:=V-G\).

\begin{assumption}[Monotonicity of the stopping advantage]
\label{ass:H-monotonicity}
Let \(H:=V-G\). We impose the following structural conditions when the epigraph
result is used.
\begin{enumerate}
\item[\textup{(M1)}]
For every \(x_1\ge0\), the map \(x_2\mapsto H(x_1,x_2)\) is nonincreasing on
\([0,\infty)\).

\item[\textup{(M2)}]
Every vertical stopping section is nonempty:
\[
   \mathcal D(x_1):=\{x_2\ge0:(x_1,x_2)\in\mathcal D\}\ne\varnothing,
   \qquad x_1\ge0.
\]

\item[\textup{(M3)}]
When monotonicity of the boundary is required, we additionally assume that
\(x_1\mapsto H(x_1,x_2)\) is monotone on \([0,\infty)\), with a direction
independent of \(x_2\): either all horizontal sections are nonincreasing or all
horizontal sections are nondecreasing.
\end{enumerate}
\end{assumption}

\begin{theorem}[Conditional epigraph structure]
\label{thm:conditional-epigraph}
Assume \(V\) is continuous and Assumption~\ref{ass:H-monotonicity}
\textup{(M1)}--\textup{(M2)} holds. Define
   $b(x_1):=\inf\{x_2\ge0:(x_1,x_2)\in\mathcal D\}$,
   $x_1\ge0$.
Then \(b\) is well defined with values in \([0,\infty)\), and
   $\mathcal D
   =
   \{(x_1,x_2)\in\mathbb R_+^2:x_2\ge b(x_1)\}$,
   $\mathcal C
   =
   \{(x_1,x_2)\in\mathbb R_+^2:x_2<b(x_1)\}$.
If, in addition, Assumption~\ref{ass:H-monotonicity}\textup{(M3)} holds, then
\(b\) is monotone: if \(H\) is horizontally nonincreasing, then \(b\) is
nonincreasing; if \(H\) is horizontally nondecreasing, then \(b\) is
nondecreasing.
\end{theorem}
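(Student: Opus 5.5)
The argument is purely deterministic and uses only $H=V-G\ge0$, $\mathcal D=\{H=0\}$, continuity of $V$ (hence of $H$, since $G$ is continuous), and the monotonicity hypotheses.

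\textbf{Well-definedness of $b$.} Fix $x_1\ge0$. By (M2) the section $\mathcal D(x_1)$ is a nonempty subset of $[0,\infty)$, so $b(x_1)=\inf\mathcal D(x_1)\in[0,\infty)$. Since $x_2\mapsto H(x_1,x_2)$ is continuous, $\mathcal D(x_1)=\{x_2\ge0: H(x_1,x_2)=0\}$ is closed. Hence $b(x_1)\in\mathcal D(x_1)$, that is, $H(x_1,b(x_1))=0$.

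\textbf{Epigraph description.} If $x_2\ge b(x_1)$, then (M1) and $H\ge0$ give
\[
0\le H(x_1,x_2)\le H(x_1,b(x_1))=0,
\]
so $(x_1,x_2)\in\mathcal D$. If $x_2<b(x_1)$, then $x_2\notin\mathcal D(x_1)$ by the definition of the infimum, so $H(x_1,x_2)>0$ and $(x_1,x_2)\in\mathcal C$. This gives both set identities, because $\mathcal C$ is the complement of $\mathcal D$ in $\mathbb R_+^2$.

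\textbf{Monotonicity of $b$ under (M3).} Take $x_1\le x_1'$.
\begin{itemize}
\item If $H$ is horizontally nonincreasing, then $0\le H(x_1',b(x_1))\le H(x_1,b(x_1))=0$. Thus $b(x_1)\in\mathcal D(x_1')$, so $b(x_1')\le b(x_1)$ and $b$ is nonincreasing.
\item If $H$ is horizontally nondecreasing, apply the same argument at the point $(x_1,b(x_1'))$: $0\le H(x_1,b(x_1'))\le H(x_1',b(x_1'))=0$. Hence $b(x_1)\le b(x_1')$ and $b$ is nondecreasing.
\end{itemize}

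The only step that needs care is showing that the infimum is attained. This is where continuity of $V$ is used, through closedness of $\mathcal D$. Without it, the point $(x_1,b(x_1))$ could lie in $\mathcal C$ and the epigraph would not be closed. The rest of the argument involves no substantive obstacle.
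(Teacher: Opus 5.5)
Your proof is correct and takes essentially the same deterministic route as the paper. Both arguments get finiteness of $b$ from (M2), the upper-interval property from (M1) together with $H\ge0$, closedness of the sections from continuity of $H$, and monotonicity of $b$ from the squeeze $0\le H(y_1,\cdot)\le H(x_1,\cdot)=0$. The only cosmetic difference is that you use closedness first to place $b(x_1)$ in $\mathcal D(x_1)$ and then evaluate directly at $b(x_1)$, whereas the paper proves the upper-interval property first and, in the monotonicity step, works with $x_2>b(x_1)$ before letting $x_2\downarrow b(x_1)$.
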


\begin{remark}[Logical status of the graph theorem]
\label{rem:main-graph-status}
Theorem~\ref{thm:conditional-epigraph} assumes monotonicity of the stopping
advantage \(H=V-G\). It does not assert that this monotonicity follows from
order preservation of the reflected diffusion, from monotonicity of \(V\), or
from pointwise monotonicity of the infinitesimal stopping gain. In concrete
models, Assumption~\ref{ass:H-monotonicity} must be verified separately.
\end{remark}

\begin{assumption}[Local regularity of an epigraph boundary]
\label{ass:boundary-regularity-main}
Whenever boundary traces, killed Green kernels, or non-tangential limits are
used, the boundary \(b\) in Theorem~\ref{thm:conditional-epigraph} is assumed to
satisfy \(b\in C((0,\infty))\cap W^{1,\infty}_{\rm loc}((0,\infty))\), and
for every compact interval \(I\Subset(0,\infty)\),
\(\inf_{x_1\in I}b(x_1)>0\).
\end{assumption}

\subsection{The singular diagonal stopping-gain measure}
\label{sec:main-singular-measure}

The max payoff is affine away from \(\Delta=\{x_1=\alpha x_2\}\), but its
second derivative across \(\Delta\) is a surface measure. The following result
gives the precise decomposition of \(\Gamma=c+rG-\mathcal LG\).

\begin{theorem}[Diagonal singular measure generated by the max payoff]
\label{thm:diagonal-singular-measure}
Let \(G(x_1,x_2)=x_1\vee\alpha x_2\), let
\(\Delta:=\{x\in\mathbb R_+^2:x_1=\alpha x_2\}\), set
\(n:=(1,-\alpha)\), and define \(q(x):=n^\top a(x)n\). Then the
stopping-gain measure \(\Gamma=c+rG-\mathcal LG\) decomposes as
   $\Gamma=\Gamma^{\rm ac}\,dx+\Gamma^\Delta$,
where, on \(\mathcal R_1:=\{x_1>\alpha x_2\}\) and
\(\mathcal R_2:=\{x_1<\alpha x_2\}\),
\[
   \Gamma^{\rm ac}(x)
   =
   \begin{cases}
   c(x)+rx_1-\mu_1(x), & x\in\mathcal R_1,\\[1mm]
   c(x)+r\alpha x_2-\alpha\mu_2(x), & x\in\mathcal R_2,
   \end{cases}
\]
and the singular diagonal component is
\[
   \Gamma^\Delta(dx)
   =
   -\dfrac{n^\top a(x)n}{2\sqrt{1+\alpha^2}}\,\sigma_\Delta(dx)
   =
   -\dfrac{q(x)}{2\sqrt{1+\alpha^2}}\,\sigma_\Delta(dx).
\]
Equivalently, for every bounded Borel function \(F\),
\[
   \int_{\mathbb R_+^2}F(x)\,\Gamma^\Delta(dx)
   =
   -\frac{1}{2\sqrt{1+\alpha^2}}
   \int_\Delta F(x)q(x)\,\sigma_\Delta(dx).
\]
\end{theorem}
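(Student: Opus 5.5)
The plan is to prove the decomposition distributionally on \(\mathbb R_{++}^2\), using the algebraic identity \(G=\tfrac12\bigl(x_1+\alpha x_2+|Y|\bigr)\) with \(Y(x)=x_1-\alpha x_2\). This reduces everything to the second distributional derivatives of \(|Y|\). I would then cross-check the result probabilistically with Tanaka's formula, which is what makes \(\Gamma\) usable as a Revuz measure for \(X\).

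\emph{Step 1: the affine part and the absolutely continuous density.} The map \(x\mapsto \tfrac12(x_1+\alpha x_2)\) is affine, so \(\mathcal L\) applied to it equals \(\tfrac12(\mu_1+\alpha\mu_2)\), a continuous function. The gradient of \(|Y|\) is \(\operatorname{sgn}(Y)\,n\), which is bounded. Its first-order contribution \(\tfrac12\operatorname{sgn}(Y)\,\mu\cdot n\) is therefore a locally bounded function with no singular part. The terms \(c\) and \(rG\) are continuous and also contribute only densities. On \(\mathcal R_1\), where \(\operatorname{sgn}Y=1\), these pieces collapse to \(c+rx_1-\mu_1\). On \(\mathcal R_2\), where \(\operatorname{sgn}Y=-1\), they collapse to \(c+r\alpha x_2-\alpha\mu_2\). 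This gives \(\Gamma^{\rm ac}\). The set \(\Delta\) is Lebesgue-null, so the value of the density there is irrelevant.

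\emph{Step 2: the second-order singular part.} For \(\phi\in C_c^\infty(\mathbb R_{++}^2)\), I would compute \(\langle\partial_{ij}|Y|,\phi\rangle=\int|Y|\,\partial_{ij}\phi\,dx\). The plan is to rotate coordinates to \((s,t)\), with \(s=Y/|n|\) normal to \(\Delta\) and \(t\) tangential. In these coordinates \(|Y|=|n|\,|s|\), and the one-dimensional identity \(|s|''=2\delta_0(s)\) gives
\[
\partial_{ij}|Y|=2\,n_in_j\,\delta_0(Y)
\]
as distributions. Here \(\delta_0(Y)\,dx\) denotes the pullback measure. By the coarea formula with \(|\nabla Y|=\sqrt{1+\alpha^2}\), this pullback equals \((1+\alpha^2)^{-1/2}\sigma_\Delta\). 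The coefficients \(a_{ij}\) are continuous (locally Lipschitz by (A1)), so multiplying the measure \(\partial_{ij}|Y|\) by \(a_{ij}\) is legitimate. Hence
\[
\tfrac12\sum_{i,j}a_{ij}\,\partial_{ij}\bigl(\tfrac12|Y|\bigr)=\tfrac12\,q\,\delta_0(Y)\,dx=\frac{q}{2\sqrt{1+\alpha^2}}\,\sigma_\Delta .
\]
Since this term enters \(\Gamma=c+rG-\mathcal LG\) with a minus sign, it gives \(\Gamma^\Delta\) as stated. The integral identity against a bounded Borel \(F\) then follows by a monotone-class argument from the case \(F=\phi\).

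\emph{Step 3: probabilistic consistency via Tanaka.} The semimartingale \(Y(X_t)=Y(x)+\int_0^t n\cdot\mu\,ds+\int_0^t n^\top\sigma\,dW+L^1_t-\alpha L^2_t\) has quadratic variation \(d\langle Y(X)\rangle=q(X)\,dt\). Tanaka's formula gives
\[
|Y(X_t)|=|Y(x)|+\int_0^t\operatorname{sgn}(Y(X_s))\,dY(X_s)+\ell^0_t ,
\]
where \(\ell^0\) is the local time of \(Y(X)\) at zero. The occupation-times formula then identifies \(\ell^0\) as the additive functional whose Revuz measure, for the \(dt\)-occupation of \(X\), is \(q\,\delta_0(Y)\,dx\). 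Concretely, \(\int_0^t f(X_s)\,d\ell^0_s\) is the limit of \(\varepsilon^{-1}\int_0^t f\,\mathbf 1_{\{|Y|<\varepsilon/2\}}\,q\,ds\). This confirms the factor \(\tfrac12\) and the sign.

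The reflection terms \(dL^i\) live on the axes. The only point where \(\Delta\) meets the axes is the corner \(0\), and this point lies outside \(\mathbb R_{++}^2\), where the measure is defined. The terms therefore enter only the boundary contribution \(B^G\) and not \(\Gamma\).

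\emph{Main obstacle.} The delicate step is Step 2. It requires justifying \(a_{ij}\,\partial_{ij}G\) as a product of a merely continuous coefficient with a measure, and converting \(\delta_0(Y)\,dx\) into surface measure with the correct constant \((1+\alpha^2)^{-1/2}\). I would handle both by working locally on compacts \(K\Subset\mathbb R_{++}^2\) in the rotated coordinates, where Fubini reduces the computation to the one-dimensional identity. Local finiteness of \(\Gamma\), and hence its Radon property, is immediate from the boundedness of \(q\) on compacts.
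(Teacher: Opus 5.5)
Your proposal is correct, but your main argument takes a different route from the paper's. Both proofs start from the splitting $G=\tfrac12(x_1+\alpha x_2+|Y|)$ and finish with the coarea constant $1/\sqrt{1+\alpha^2}$.

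\textbf{The paper's route.} The proof is probabilistic. It applies Tanaka's formula to $Y(X_t)$ and reads off $q$ from $d\langle Y(X)\rangle_t=q(X_t)\,dt$. It then writes the symmetric local time, via the occupation-density formula, in the shorthand $dL^0_t(Y)=q(X_t)\delta_0(Y(X_t))\,dt$.

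\textbf{Your route.} Your Step 2 is a deterministic distributional computation, $\partial_{ij}|Y|=2n_in_j\,\delta_0(Y)$. Here $q$ appears as the contraction $\sum_{i,j}a_{ij}n_in_j$ of a Hessian measure rather than as a quadratic-variation density. Your Step 3 is essentially the paper's whole proof, demoted to a consistency check.

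\textbf{What each buys.} Your route proves the statement literally as posed: an identity of Radon measures for the distribution $c+rG-\mathcal LG$ on $\mathbb R_{++}^2$. It uses only continuity of $a$ (to multiply the order-zero distribution $\partial_{ij}G$), Fubini in rotated coordinates, and boundedness of $\nabla|Y|$ to rule out singular first-order terms. It needs no Revuz correspondence and no discussion of reflection. The paper's passage from $L^0(Y)$ to a measure on the plane is only formal until one invokes the Revuz correspondence with Lebesgue reference measure. In exchange, the paper's route directly produces the additive functional $A^{\Gamma}$ and the It\^o--Tanaka identity for $G(X)$ that the killed-resolvent theorem actually uses. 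Your Step 3 supplies that link at the same level of rigour, so you lose nothing.

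\textbf{A caveat shared with the paper.} Your monotone-class extension from $C_c^\infty(\mathbb R_{++}^2)$ gives the integral identity for bounded Borel $F$ with compact support, or more generally whenever $\int_\Delta|F|\,q\,d\sigma_\Delta<\infty$. For an arbitrary bounded Borel $F$, the right-hand side over the unbounded ray $\Delta$ need not be finite.
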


\begin{remark}[Sign of the diagonal component]
\label{rem:main-diagonal-sign}
Because \(a(x)\) is nonnegative definite, \(q(x)=n^\top a(x)n\ge0\), and hence
\(\Gamma^\Delta\le0\) as a signed measure. Thus the usual pointwise
stopping-gain sign condition cannot be imposed literally on the full measure
\(\Gamma\) when the stopping set intersects \(\Delta\) in positive
one-dimensional measure. Any sign condition must specify whether it concerns
\(\Gamma^{\rm ac}\) away from \(\Delta\), the full signed measure, or a
decomposition in which the diagonal part is handled separately.
\end{remark}

\subsection{Killed-resolvent representation}
\label{sec:main-killed-resolvent}

The central potential-theoretic correction is that the resolvent must be killed
at the first entry into the stopping set. The unrestricted reflected resolvent
counts post-stopping occupation and is generally not the correct object.

For a signed smooth measure \(\nu=\nu^+-\nu^-\) such that both killed potentials
are finite, define
\[
   R_r^{\mathcal C}\nu(x)
   :=
   \mathbb E_x\left[
      \int_0^{\tau_{\mathcal D}}e^{-rs}\,dA_s^\nu
   \right],
   \qquad
   A^\nu:=A^{\nu^+}-A^{\nu^-},
\]
where \(\tau_{\mathcal D}:=\inf\{t\ge0:X_t\in\mathcal D\}\). If
\(\nu=f\,dx\), then
\[
   R_r^{\mathcal C}f(x)
   =
   \mathbb E_x\left[
      \int_0^{\tau_{\mathcal D}}e^{-rs}f(X_s)\,ds
   \right].
\]
By contrast, the unrestricted reflected resolvent is
\[
   R_r^{\rm R}f(x)
   :=
   \mathbb E_x\left[
      \int_0^\infty e^{-rs}f(X_s)\,ds
   \right].
\]

\begin{assumption}[Admissibility of the killed stopping-gain potential]
\label{ass:killed-potential-main}
The signed measure \(\Gamma=\Gamma^+-\Gamma^-\) has positive and negative
parts that are smooth measures for \(X\), and
   $R_r^{\mathcal C}\Gamma^+(x)+R_r^{\mathcal C}\Gamma^-(x)<\infty$,
   $x\in\mathbb R_+^2$.
The associated signed additive functional is
\(A^\Gamma:=A^{\Gamma^+}-A^{\Gamma^-}\), and the stopped
It\^o--Tanaka identity for \(G(X)\) holds up to
\(\tau_{\mathcal D}\) with the required localization and uniform integrability.
\end{assumption}

\begin{theorem}[Killed-resolvent representation of the value]
\label{thm:killed-resolvent-representation}
Assume that \(\tau_{\mathcal D}\) is optimal and that
Assumption~\ref{ass:killed-potential-main} holds. Then, for every
\(x\in\mathbb R_+^2\),
\[
   V(x)
   =
   G(x)-R_r^{\mathcal C}\Gamma(x)
   =
   G(x)
   -
   \mathbb E_x\left[
      \int_0^{\tau_{\mathcal D}}e^{-rs}\,dA_s^\Gamma
   \right].
\]
In general,
   $R_r^{\mathcal C}\Gamma
   \ne
   R_r^{\rm R}(\Gamma\mathbf 1_{\mathcal C})$,
because the unrestricted reflected process continues after
\(\tau_{\mathcal D}\) and may later re-enter \(\mathcal C\).
\end{theorem}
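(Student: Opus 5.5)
The plan is to apply the stopped It\^o--Tanaka identity for \(G(X)\) from Assumption~\ref{ass:killed-potential-main} along the optimal time \(\tau_{\mathcal D}\). Since \(\Gamma=c+rG-\mathcal LG\), the formal identity \((\mathcal L-r)G-c=-\Gamma\) turns the discounted It\^o--Tanaka expansion of \(G\) into
\[
   e^{-r(t\wedge\eta_n)}G(X_{t\wedge\eta_n})
   =
   G(x)
   -\int_0^{t\wedge\eta_n}e^{-rs}c(X_s)\,ds
   -\int_0^{t\wedge\eta_n}e^{-rs}\,dA_s^\Gamma
   +B^G_{t\wedge\eta_n}+M^G_{t\wedge\eta_n},
\]
with \(t\) replaced by \(\tau_{\mathcal D}\). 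Here \(A^\Gamma=A^{\Gamma^+}-A^{\Gamma^-}\) contains both the absolutely continuous part \(\int\Gamma^{\rm ac}(X_s)\,ds\) and the diagonal part. By Theorem~\ref{thm:diagonal-singular-measure} and Tanaka's formula for \(Y(X)\) with \(G=(x_1+\alpha x_2+|Y|)/2\), the diagonal part is \(-\tfrac12\int e^{-rs}\,dL^0_s(Y(X))\), which is the additive functional of \(\Gamma^\Delta\) under the co-area conversion. The martingale term \(M^G\) has zero expectation after localization.

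\emph{Reflection term.} The term \(B^G\) collects \(\int e^{-rs}\partial_iG(X_s)\,dL^i_s\). On the face \(\{x_1=0\}\) the payoff is \(G=\alpha x_2\), so \(\partial_1G=0\) there, except at the corner where \(G\) has a kink. On \(\{x_2=0\}\) we have \(G=x_1\), so \(\partial_2G=0\). Hence \(B^G\) vanishes, up to the corner. The corner is handled by the assumption that the stopped identity holds, or by noting that \(L\) does not charge the corner, as in the Appendix lemma on corner reflection. I would simply absorb this into the hypothesis that ``the stopped It\^o--Tanaka identity for \(G(X)\) holds up to \(\tau_{\mathcal D}\).''

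\emph{Passing to the limit.} Taking expectations and letting \(n\to\infty\) uses the uniform integrability in Assumption~\ref{ass:killed-potential-main} together with finiteness of \(R_r^{\mathcal C}\Gamma^\pm\). This separate finiteness is what permits splitting into positive and negative parts and applying monotone convergence to each. The limit gives
\[
   \mathbb E_x\!\left[e^{-r\tau_{\mathcal D}}G(X_{\tau_{\mathcal D}})-\int_0^{\tau_{\mathcal D}}e^{-rs}c(X_s)\,ds\right]
   = G(x)-R_r^{\mathcal C}\Gamma(x).
\]
The running-cost terms cancel exactly because \(c\) appears inside \(\Gamma\) as well. On \(\{\tau_{\mathcal D}=\infty\}\) we use the convention \(e^{-r\tau}G:=0\) and the integrability assumptions. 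The left side equals \(V(x)\) by optimality of \(\tau_{\mathcal D}\). For \(x\in\mathcal D\) we have \(\tau_{\mathcal D}=0\), and the formula is trivial.

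\emph{Main obstacle.} I expect the main difficulty to be the bookkeeping of the signed functional: in particular, identifying \(A^{\Gamma^\Delta}\) with \(-\tfrac12\) times the local time of \(Y(X)\) at zero, with the correct normalization \(1/\sqrt{1+\alpha^2}\). This identification requires the occupation-time formula for \(d\langle Y(X)\rangle=q(X)\,ds\) and the co-area formula. I would cite the proof of Theorem~\ref{thm:diagonal-singular-measure} for it.

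\emph{The inequality \(R_r^{\mathcal C}\Gamma\neq R_r^{\rm R}(\Gamma\mathbf 1_{\mathcal C})\).} I would argue by example, not in general. Take \(\Gamma=f\,dx\) with \(f>0\) on some ball \(B\subset\mathcal C\), and a starting point \(x\in\mathcal D\). Then \(R_r^{\mathcal C}\Gamma(x)=0\), because \(\tau_{\mathcal D}=0\). On the other hand, \(R_r^{\rm R}(f\mathbf 1_{\mathcal C})(x)>0\), since by local ellipticity (A2) the reflected process visits \(B\) with positive probability. This suffices for ``in general.''
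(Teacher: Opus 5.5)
Your derivation of $V=G-R_r^{\mathcal C}\Gamma$ follows the paper's proof. You use optimality of $\tau_{\mathcal D}$ and apply the stopped It\^o--Tanaka identity for $G(X)$ on $[0,t\wedge\tau_{\mathcal D}]$, absorbing the reflection/corner contribution into the hypothesis exactly as the paper does. You then take expectations and let $t\to\infty$ using uniform integrability and the separate finiteness of $R_r^{\mathcal C}\Gamma^{\pm}$.

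There is one sign to fix in your displayed expansion. Since $(\mathcal L-r)G=c-\Gamma$, the running cost enters as $+\int_0^{t\wedge\eta_n}e^{-rs}c(X_s)\,ds$. With the minus sign you wrote, taking expectations would put $e^{-r\tau_{\mathcal D}}G(X_{\tau_{\mathcal D}})+\int_0^{\tau_{\mathcal D}}e^{-rs}c(X_s)\,ds$ on the left, which is not the stopping payoff. Your final limiting identity is the correct one, and it is what the $+$ sign gives.

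For the second claim, the paper argues, in its remark on killed versus unrestricted resolvents, through the decomposition
\[
   R_r^{\mathrm R}(\nu\mathbf 1_{\mathcal C})(x)
   =R_r^{\mathcal C}\nu(x)
   +\mathbb E_x\big[e^{-r\tau_{\mathcal D}}R_r^{\mathrm R}(\nu\mathbf 1_{\mathcal C})(X_{\tau_{\mathcal D}})\big]
\]
for a positive smooth measure $\nu$. Your example (start at $x\in\mathcal D$, with a density charging a ball $B\subset\mathcal C$) is a concrete instance of the same operator-level phenomenon. It is valid provided $\mathcal D\neq\varnothing$ and $f\ge0$ on all of $\mathcal C$, not merely $f>0$ on $B$.

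However, ``take $\Gamma=f\,dx$'' is not available to you. The measure $\Gamma=c+rG-\mathcal LG$ is fixed by the model, always carries the nonzero diagonal part $\Gamma^\Delta$ under (A2), and is signed; indeed $R_r^{\mathcal C}\Gamma=G-V<0$ on $\mathcal C$ by the first part. So your argument shows that $R_r^{\mathcal C}$ and $R_r^{\mathrm R}(\,\cdot\,\mathbf 1_{\mathcal C})$ are different operators, not that they differ on this $\Gamma$.

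For the actual $\Gamma$, the post-stopping term can vanish by cancellation. Suppose $V$ is $C^1$ across the free boundary (smooth fit, so the generalized It\^o formula for $V(X)$ has no free-boundary local time) and satisfies the Neumann condition. Suppose also that $\mathbb E_x[e^{-rt}(V+G)(X_t)]\to0$ and all potentials are finite. Then It\^o applied to $V$ and to $G$ gives $V=R_r^{\mathrm R}(\Gamma\mathbf 1_{\mathcal D})-R_r^{\mathrm R}c$ and $G=R_r^{\mathrm R}\Gamma-R_r^{\mathrm R}c$. Hence $G-V=R_r^{\mathrm R}(\Gamma\mathbf 1_{\mathcal C})$, and the two potentials coincide; this is the classical early-exercise-premium representation.

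The ``in general'' clause should therefore be stated and proved at the operator level, which is also all the paper's remark establishes. Alternatively, it should be exhibited in a case where smooth fit fails.
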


\begin{remark}[Killed versus unrestricted reflected resolvent]
\label{rem:killed-versus-unrestricted-resolvent-identity}
Let \(\nu\) be a positive smooth measure for \(X\), and assume that all
potentials below are finite. The unrestricted reflected resolvent of
\(\nu\mathbf 1_{\mathcal C}\) is
\[
   R_r^{\rm R}(\nu\mathbf 1_{\mathcal C})(x)
   :=
   \mathbb E_x\!\left[
      \int_0^\infty e^{-rs}\mathbf 1_{\mathcal C}(X_s)\,dA_s^\nu
   \right].
\]
Splitting the integral at
\(\tau_{\mathcal D}:=\inf\{t\ge0:X_t\in\mathcal D\}\) gives
\begin{equation}
\label{eq:unrestricted-split-at-tauD}
   R_r^{\rm R}(\nu\mathbf 1_{\mathcal C})(x)=
   \mathbb E_x\!\left[
      \int_0^{\tau_{\mathcal D}}
         e^{-rs}\mathbf 1_{\mathcal C}(X_s)\,dA_s^\nu
   \right] +
   \mathbb E_x\!\left[
      \int_{\tau_{\mathcal D}}^\infty
         e^{-rs}\mathbf 1_{\mathcal C}(X_s)\,dA_s^\nu
   \right].
\end{equation}
Since \(X_s\in\mathcal C\) for \(0\le s<\tau_{\mathcal D}\), the first term is
exactly the killed potential:
\[
   \mathbb E_x\!\left[
      \int_0^{\tau_{\mathcal D}}
         e^{-rs}\mathbf 1_{\mathcal C}(X_s)\,dA_s^\nu
   \right]
   =
   \mathbb E_x\!\left[
      \int_0^{\tau_{\mathcal D}}e^{-rs}\,dA_s^\nu
   \right]
   =
   R_r^{\mathcal C}\nu(x).
\]
For the second term, use the additivity of \(A^\nu\), the time shift
\(\theta_t\), and the strong Markov property at \(\tau_{\mathcal D}\). On
\(\{\tau_{\mathcal D}<\infty\}\),
\[
   \int_{\tau_{\mathcal D}}^\infty
      e^{-rs}\mathbf 1_{\mathcal C}(X_s)\,dA_s^\nu
   =
   e^{-r\tau_{\mathcal D}}
   \int_0^\infty
      e^{-ru}\mathbf 1_{\mathcal C}(X_{\tau_{\mathcal D}+u})\,
      d\big(A_{\tau_{\mathcal D}+u}^\nu-A_{\tau_{\mathcal D}}^\nu\big),
\]
and hence
\[
   \mathbb E_x\!\left[
      \int_{\tau_{\mathcal D}}^\infty
         e^{-rs}\mathbf 1_{\mathcal C}(X_s)\,dA_s^\nu
   \right]   =
   \mathbb E_x\!\left[
      e^{-r\tau_{\mathcal D}}
      \mathbb E_{X_{\tau_{\mathcal D}}}\!\left[
         \int_0^\infty
            e^{-ru}\mathbf 1_{\mathcal C}(X_u)\,dA_u^\nu
      \right]
   \right]   =
   \mathbb E_x\!\left[
      e^{-r\tau_{\mathcal D}}
      R_r^{\rm R}(\nu\mathbf 1_{\mathcal C})(X_{\tau_{\mathcal D}})
   \right].
\]
Combining this identity with \eqref{eq:unrestricted-split-at-tauD} gives the
resolvent decomposition
   $R_r^{\rm R}(\nu\mathbf 1_{\mathcal C})(x)
   =
   R_r^{\mathcal C}\nu(x)
   +
   \mathbb E_x\!\left[
      e^{-r\tau_{\mathcal D}}
      R_r^{\rm R}(\nu\mathbf 1_{\mathcal C})(X_{\tau_{\mathcal D}})
   \right]$.
Therefore,
   $R_r^{\mathcal C}\nu(x)
   =
   R_r^{\rm R}(\nu\mathbf 1_{\mathcal C})(x)$
can hold only if the post-stopping contribution vanishes:
\begin{equation}
\label{eq:post-stopping-term-vanishes}
   \mathbb E_x\!\left[
      e^{-r\tau_{\mathcal D}}
      R_r^{\rm R}(\nu\mathbf 1_{\mathcal C})(X_{\tau_{\mathcal D}})
   \right]=0.
\end{equation}
In general there is no reason for \eqref{eq:post-stopping-term-vanishes} to
hold. Even if \(X_{\tau_{\mathcal D}}\in\mathcal D\), the unrestricted
reflected process is not killed at \(\tau_{\mathcal D}\); it continues to
evolve and may later re-enter \(\mathcal C\). 
Hence $R_r^{\mathcal C}\nu
   \ne
   R_r^{\rm R}(\nu\mathbf 1_{\mathcal C})$ in general.
For a signed admissible measure \(\nu=\nu^+-\nu^-\), the same computation is
applied separately to \(\nu^+\) and \(\nu^-\), provided
   $R_r^{\mathcal C}\nu^+(x)+R_r^{\mathcal C}\nu^-(x)
   +R_r^{\rm R}(\nu^+\mathbf 1_{\mathcal C})(x)
   +R_r^{\rm R}(\nu^-\mathbf 1_{\mathcal C})(x)<\infty$.
Thus every identity above is understood through the Jordan decomposition
   $R_r^{\mathcal C}\nu
   =
   R_r^{\mathcal C}\nu^+-R_r^{\mathcal C}\nu^-$,
   $R_r^{\rm R}(\nu\mathbf 1_{\mathcal C})
   =
   R_r^{\rm R}(\nu^+\mathbf 1_{\mathcal C})
   -
   R_r^{\rm R}(\nu^-\mathbf 1_{\mathcal C})$.
This is the algebraic reason why the correct value representation is
   $V=G-R_r^{\mathcal C}\Gamma$,
not
   $V=G-R_r^{\rm R}(\Gamma\mathbf 1_{\mathcal C})$.
\end{remark}

\subsection{Boundary trace condition}
\label{sec:main-boundary-trace}

The killed-resolvent representation implies a continuation-side trace
condition on the free boundary. This is a trace statement, not a substantive
equation obtained by starting the killed process at a boundary point. If
\(x\in\mathcal D\), then \(\tau_{\mathcal D}=0\), so the killed potential
vanishes trivially.

\begin{proposition}[Continuation-side trace condition]
\label{prop:boundary-trace}
Assume the hypotheses of Theorem~\ref{thm:killed-resolvent-representation}.
Suppose, in addition, that \(\mathcal D\) has the epigraph representation
\(\mathcal D=\{(x_1,x_2)\in\mathbb R_+^2:x_2\ge b(x_1)\}\), and let
\(z_b(x_1):=(x_1,b(x_1))\). At every boundary point where the continuation-side
trace exists,
\[
   \lim_{\substack{x\to z_b(x_1)\\ x\in\mathcal C}}
   R_r^{\mathcal C}\Gamma(x)=0.
\]
If non-tangential traces are used, the condition is interpreted as
\[
   \lim_{\substack{x\to z_b(x_1)\\ x\in\mathcal C\ {\rm n.t.}}}
   R_r^{\mathcal C}\Gamma(x)=0 .
\]
\end{proposition}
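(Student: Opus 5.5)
The plan is to derive the trace condition directly from the killed-resolvent identity of Theorem~\ref{thm:killed-resolvent-representation}, combined with continuity of \(V\) and \(G\). By that theorem, for every \(x\in\mathbb R_+^2\),
\[
   R_r^{\mathcal C}\Gamma(x)=G(x)-V(x)=-H(x).
\]
Hence the continuation-side behaviour of the killed potential is exactly that of the stopping advantage \(H=V-G\). The statement then reduces to showing that
\[
   H(x)\to H(z_b(x_1))=0 \quad\text{as } x\to z_b(x_1) \text{ within } \mathcal C .
\]

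The steps, in order, are as follows.

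\begin{enumerate}
\item Check that \(z_b(x_1)\in\mathcal D\). Under the epigraph representation \(\mathcal D=\{x_2\ge b(x_1)\}\), the point \((x_1,b(x_1))\) satisfies \(x_2=b(x_1)\), so it lies in \(\mathcal D\) and \(H(z_b(x_1))=0\). Equivalently, \(\mathcal D\) is closed whenever \(V\) is continuous, and \(b(x_1)\) is an infimum of a closed section, hence attained.
\item Invoke continuity of \(V\), which is implicit in the epigraph setting of Theorem~\ref{thm:conditional-epigraph}. Since \(G\) is Lipschitz, \(H\) is continuous. Therefore, along any approach \(x\to z_b(x_1)\), whether unrestricted, restricted to \(\mathcal C\), or non-tangential, we get \(H(x)\to0\).
\item Conclude that \(R_r^{\mathcal C}\Gamma(x)=-H(x)\to0\) along the same approach. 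The non-tangential version follows because it is a restriction of the full limit.
\end{enumerate}

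I would also record why the result is nontrivial only as a trace statement. On \(\mathcal D\) one has \(\tau_{\mathcal D}=0\) and \(R_r^{\mathcal C}\Gamma=0\) identically. On \(\mathcal C\), however, \(R_r^{\mathcal C}\Gamma\) is a genuine expectation of a signed additive functional up to the hitting time. The identity \(R_r^{\mathcal C}\Gamma=-H\) turns the limit of this expectation into a continuity statement for \(V\).

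The main obstacle is that continuity of \(V\) is not listed among the hypotheses of Theorem~\ref{thm:killed-resolvent-representation}. My first attempt would be to obtain it from (A4) and (A6) via the standard argument in the Appendix: for the finite-horizon truncations the payoff functionals are continuous in the initial state, and the polynomial growth bound together with \(r>0\) controls the tail uniformly on compacts. This yields continuity of \(V\) on \(\mathbb R_+^2\).

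If that route is unavailable, I would fall back on a probabilistic argument. It rests on regularity of the boundary point, namely that \(\tau_{\mathcal D}\to0\) in probability as \(x\to z_b(x_1)\), which holds under Assumption~\ref{ass:boundary-regularity-main} (a Lipschitz graph gives an exterior cone condition) and local ellipticity. Combined with dominated convergence, this sends the integral \(\mathbb E_x[\int_0^{\tau_{\mathcal D}}e^{-rs}\,dA^{\Gamma^\pm}_s]\) to zero. The delicate part is the singular component \(\Gamma^\Delta\), whose additive functional is a local time on \(\Delta\). Here I would use the finiteness of the killed potentials in Assumption~\ref{ass:killed-potential-main}, together with the strong Markov property, to obtain uniform integrability near the boundary point.
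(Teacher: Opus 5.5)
Your main argument is the paper's proof: the killed-resolvent identity gives \(R_r^{\mathcal C}\Gamma=G-V\), and continuity of \(V\) and \(G\) together with \(z_b(x_1)\in\mathcal D\) forces the continuation-side limit to be zero, with the non-tangential case following as a restriction. The paper also just takes continuity of \(V\) for granted (its proof begins ``Since \(V\) and \(G\) are continuous''), so your fallback routes are not needed. They are also not sound as written: the Appendix gives only lower semicontinuity under extra hypotheses, (A6) alone does not supply the uniform tail control you claim, and pointwise finiteness of the killed potentials does not by itself give the uniform integrability near the boundary point that your probabilistic route requires.
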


\begin{remark}[Parameterized Green-kernel form of the diagonal potential]
\label{rem:green-kernel-diagonal-integral}
Assume that the reflected diffusion killed on
\(\mathcal D\) admits a killed \(r\)-Green kernel
\(G_r^{\mathcal C}(x,y)\), so that, for sufficiently integrable densities
\(f\),
\[
   R_r^{\mathcal C}f(x)
   =
   \int_{\mathcal C}G_r^{\mathcal C}(x,y)f(y)\,dy .
\]
Then the absolutely continuous part of the stopping-gain potential is
\[
   R_r^{\mathcal C}\Gamma^{\rm ac}(x)
   =
   \int_{\mathcal C}G_r^{\mathcal C}(x,y)\Gamma^{\rm ac}(y)\,dy .
\]
The diagonal part can be written as an explicit one-dimensional integral. Since
   $\Delta=\{(\alpha s,s):s\ge0\}$,
   $d\sigma_\Delta=\sqrt{1+\alpha^2}\,ds$,
and
\[
   \Gamma^\Delta(dz)
   =
   -\frac{q(z)}{2\sqrt{1+\alpha^2}}\,\sigma_\Delta(dz),
\]
we obtain, for \(x\in\mathcal C\),
\[
\begin{aligned}
   R_r^{\mathcal C}\Gamma^\Delta(x)
   &=
   \int_{\Delta\cap\mathcal C}
      G_r^{\mathcal C}(x,z)\,\Gamma^\Delta(dz) =
   -\int_{\Delta\cap\mathcal C}
      G_r^{\mathcal C}(x,z)
      \frac{q(z)}{2\sqrt{1+\alpha^2}}\,\sigma_\Delta(dz)  \\
   &=
   -\frac12
   \int_{\{s\ge0:(\alpha s,s)\in\mathcal C\}}
      G_r^{\mathcal C}(x,(\alpha s,s))q(\alpha s,s)\,ds .
\end{aligned}
\]
Hence the full killed-potential representation becomes
\[
   R_r^{\mathcal C}\Gamma(x)=
   \int_{\mathcal C}G_r^{\mathcal C}(x,y)\Gamma^{\rm ac}(y)\,dy 
   -\frac12
   \int_{\{s\ge0:(\alpha s,s)\in\mathcal C\}}
      G_r^{\mathcal C}(x,(\alpha s,s))q(\alpha s,s)\,ds .
\]
Using the explicit formula for \(\Gamma^{\rm ac}\), this may also be written as
\[
\begin{aligned}
   R_r^{\mathcal C}\Gamma(x)
   &=
   \int_{\mathcal C\cap\mathcal R_1}
      G_r^{\mathcal C}(x,y)\big(c(y)+ry_1-\mu_1(y)\big)\,dy +
   \int_{\mathcal C\cap\mathcal R_2}
      G_r^{\mathcal C}(x,y)\big(c(y)+r\alpha y_2-\alpha\mu_2(y)\big)\,dy  \\
   &\quad
   -\frac12
   \int_{\{s\ge0:(\alpha s,s)\in\mathcal C\}}
      G_r^{\mathcal C}(x,(\alpha s,s))q(\alpha s,s)\,ds .
\end{aligned}
\]
Consequently, at a regular boundary point \(z_b(x_1)=(x_1,b(x_1))\), the
continuation-side boundary trace condition
\[
   \lim_{\substack{x\to z_b(x_1)\\x\in\mathcal C}}
   R_r^{\mathcal C}\Gamma(x)=0
\]
takes the concrete form
\[
   0=
   \lim_{\substack{x\to z_b(x_1)\\x\in\mathcal C}}
   \bigg[
      \int_{\mathcal C}G_r^{\mathcal C}(x,y)\Gamma^{\rm ac}(y)\,dy  
      -\frac12
      \int_{\{s\ge0:(\alpha s,s)\in\mathcal C\}}
         G_r^{\mathcal C}(x,(\alpha s,s))q(\alpha s,s)\,ds
   \bigg].
\]
Equivalently, after expanding \(\Gamma^{\rm ac}\),
\[
\begin{aligned}
   0
   &=
   \lim_{\substack{x\to z_b(x_1)\\x\in\mathcal C}}
   \bigg[
      \int_{\mathcal C\cap\mathcal R_1}
         G_r^{\mathcal C}(x,y)\big(c(y)+ry_1-\mu_1(y)\big)\,dy +
      \int_{\mathcal C\cap\mathcal R_2}
         G_r^{\mathcal C}(x,y)\big(c(y)+r\alpha y_2-\alpha\mu_2(y)\big)\,dy \\
   &\qquad\qquad-
      \frac12
      \int_{\{s\ge0:(\alpha s,s)\in\mathcal C\}}
         G_r^{\mathcal C}(x,(\alpha s,s))q(\alpha s,s)\,ds
   \bigg].
\end{aligned}
\]
The last integral is the explicit contribution of the kink. Thus the
boundary-trace equation is not merely a volume equation over \(\mathcal C\);
it contains a one-dimensional diagonal correction generated by the singular
part of \(c+rG-\mathcal LG\).
\end{remark}

\subsection{Candidate-boundary verification}
\label{sec:main-candidate-verification}

The final result explains how a proposed boundary should be verified. Solving
a boundary-trace equation alone is not enough. One must verify majorization,
contact, the continuation equation, reflected boundary behavior, global
measure-superharmonicity, and the relevant integrability assumptions.

Let \(h:\mathbb R_+\to\mathbb R_+\) be a candidate boundary and define
\[
   \mathcal D_h:=\{(x_1,x_2)\in\mathbb R_+^2:x_2\ge h(x_1)\},
   \qquad
   \mathcal C_h:=\mathbb R_+^2\setminus\mathcal D_h,
   \qquad
   \tau_h:=\inf\{t\ge0:X_t\in\mathcal D_h\}.
\]
For an admissible signed measure \(\nu\), set
\[
   R_r^{\mathcal C_h}\nu(x)
   :=
   \mathbb E_x\left[
      \int_0^{\tau_h}e^{-rs}\,dA_s^\nu
   \right],
\]
and define the candidate value
   $U_h(x):=G(x)-R_r^{\mathcal C_h}\Gamma(x)$.

\begin{theorem}[Candidate-boundary verification]
\label{thm:candidate-verification}
Let \(h:\mathbb R_+\to\mathbb R_+\) be a candidate boundary and let
\(U_h:=G-R_r^{\mathcal C_h}\Gamma\). Assume:
\begin{enumerate}
\item[\textup{(C1)}]
\(U_h\) is continuous and of polynomial growth.

\item[\textup{(C2)}]
\(U_h\ge G\) on \(\mathbb R_+^2\).

\item[\textup{(C3)}]
\(U_h=G\) on \(\mathcal D_h\). Under the convention \(\tau_h=0\) on
\(\mathcal D_h\), this is automatic whenever \(R_r^{\mathcal C_h}\Gamma\) is
defined by the killed potential above.

\item[\textup{(C4)}]
In \(\mathcal C_h\cap\mathbb R_{++}^2\), \(U_h\) satisfies the continuation
equation $(\mathcal L-r)U_h-c=0$
in the weak, viscosity, or It\^o--Krylov--Tanaka sense required by
Theorem~\ref{thm:verification-measure}.

\item[\textup{(C5)}]
\(U_h\) satisfies the normal-reflection condition
\(\partial_iU_h=0\) on \(\{x_i=0\}\), \(i=1,2\), in the relevant trace or
reflected-viscosity sense.

\item[\textup{(C6)}]
\(U_h\) is globally measure-superharmonic relative to the running cost:
   $(\mathcal L-r)U_h-c\le0$ as a signed measure on $\mathbb R_{++}^2$.

\item[\textup{(C7)}]
\(U_h\) is It\^o--Krylov--Tanaka admissible and satisfies the localization,
signed-additive-functional, and uniform-integrability assumptions required in
Theorem~\ref{thm:verification-measure}.
\end{enumerate}
Then \(U_h=V\) on \(\mathbb R_+^2\), and \(\tau_h\) is optimal. If, in
addition,
   $U_h>G$ on $\mathcal C_h$,
then \(\mathcal D_h=\mathcal D\). Consequently, if the true stopping set is
known to be an epigraph
\(\mathcal D=\{(x_1,x_2):x_2\ge b(x_1)\}\), then \(h=b\) at every point where
both graph representatives are defined with the same convention.
\end{theorem}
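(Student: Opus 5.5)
The plan is to reduce Theorem~\ref{thm:candidate-verification} to Theorem~\ref{thm:verification-measure}, taking \(u:=U_h\) and \(\mathcal O:=\mathcal C_h\), and then to deduce the set identification from the strict inequality \(U_h>G\) on \(\mathcal C_h\).

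First I match the hypotheses. Since \(\mathcal D_h\) is the epigraph of \(h\), the set \(\mathcal C_h\) is its complement, so \(\mathcal O^c=\mathcal D_h\) and \(\tau_{\mathcal O^c}=\tau_h\). For \(\mathcal C_h\) to be relatively open I use continuity of \(U_h\) (C1) together with closedness of \(\mathcal D_h\). If \(h\) is not lower semicontinuous, I would replace \(\mathcal D_h\) by its closure, which does not change \(\tau_h\) up to regular points. The correspondence between hypotheses is then:
(C1) gives continuity and polynomial growth;
(C2) is (V1);
(C3) is (V2), and it is automatic because \(\tau_h=0\) on \(\mathcal D_h\), so \(R_r^{\mathcal C_h}\Gamma=0\) there;
(C4) is the continuation equation in (V3);
(C6) is (V4);
(C5) and (C7) give (V5), with (C5) supplying (I3) since the Neumann trace kills the reflection term, as in Remark~\ref{rem:reflected-ito-boundary-term}.

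Applying the first part of Theorem~\ref{thm:verification-measure} then yields \(U_h\ge V\). The \(W^{2,p}_{\rm loc}\) requirement in (V3) is read as part of the It\^o--Krylov--Tanaka admissibility asserted in (C4) and (C7).

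For the reverse inequality I would argue directly rather than through the ``in addition'' clause, since it is cleaner. The stopped formula (I2) on \([0,\tau_h\wedge\eta_n]\) has \(dA^{\mu_u}=0\) before \(\tau_h\), because \(\mu_u\) vanishes on \(\mathcal C_h\) by (C4) and the path lies in \(\mathcal C_h\) before \(\tau_h\). It also has zero boundary term by (C5) and zero-mean martingale part by (C7). Hence
\[
U_h(x)=\mathbb E_x\!\left[e^{-r(\tau_h\wedge\eta_n)}U_h(X_{\tau_h\wedge\eta_n})-\int_0^{\tau_h\wedge\eta_n}e^{-rs}c(X_s)\,ds\right].
\]
I let \(n\to\infty\) using the uniform integrability in (C7). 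On \(\{\tau_h<\infty\}\) we have \(U_h(X_{\tau_h})=G(X_{\tau_h})\) by (C3) and continuity, which uses closedness of \(\mathcal D_h\). On \(\{\tau_h=\infty\}\), polynomial growth, the Lyapunov/integrability assumption (A6) and \(r>0\) make the discounted term vanish. This gives \(U_h(x)=J(x,\tau_h)\le V(x)\), so \(U_h=V\) and \(\tau_h\) is optimal.

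The identification of the sets is immediate. On \(\mathcal D_h\) we have \(V=U_h=G\), so \(\mathcal D_h\subseteq\mathcal D\). On \(\mathcal C_h\) the extra hypothesis gives \(V=U_h>G\), so \(\mathcal C_h\subseteq\mathcal C\). Since each pair of sets partitions \(\mathbb R_+^2\), \(\mathcal D_h=\mathcal D\). If also \(\mathcal D=\{x_2\ge b(x_1)\}\), comparing vertical sections gives \([h(x_1),\infty)=[b(x_1),\infty)\), and hence \(h(x_1)=b(x_1)\) under the same infimum convention as in Theorem~\ref{thm:conditional-epigraph}.

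The main obstacle is the terminal passage on \(\{\tau_h=\infty\}\) together with the \(n\to\infty\) limit. I would handle it with (C7), and if that is insufficient, with an appendix Lyapunov function \(\Psi\) satisfying \((\mathcal L-r)\Psi\le0\) and the Neumann condition, which dominates \(|U_h|+G\). A secondary issue is the diagonal compatibility: by Remark~\ref{rem:main-V2-V4-compatibility}, (C6) forces \(\Delta\cap\mathcal D_h\) to be \(q\sigma_\Delta\)-null. This is consistent with the hypotheses rather than an extra step, and I would note it in passing.
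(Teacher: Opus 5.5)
Your proposal is correct and follows the paper's route. You obtain $V\le U_h$ from Theorem~\ref{thm:verification-measure} with $\mathcal O=\mathcal C_h$, and the reverse inequality by applying the exact stopped formula directly up to $\tau_h$; the paper also does this rather than invoking that theorem's second clause. Both of you then get $\mathcal D_h=\mathcal D$ from $\{U_h=G\}=\mathcal D_h$ together with $U_h=V$.

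Your extra care about closedness of $\mathcal D_h$ (needed for $X_{\tau_h}\in\mathcal D_h$) and about the event $\{\tau_h=\infty\}$ goes slightly beyond the paper, which absorbs both into its integrability hypotheses. One correction to your Lyapunov fallback: you need the appendix's strict margin $\mathcal L\Psi\le\lambda\Psi+K$ with $\lambda<r$. The condition $(\mathcal L-r)\Psi\le0$ alone only bounds $\mathbb E_x[e^{-rt}\Psi(X_t)]$ and does not force it to vanish.
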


\begin{remark}[No uniqueness from the trace condition alone]
\label{rem:main-no-fredholm-uniqueness}
The trace condition
\[
   \lim_{\substack{x\to z_h(x_1)\\x\in\mathcal C_h}}
   R_r^{\mathcal C_h}\Gamma(x)=0
\]
does not by itself prove uniqueness of \(h\), nor does it prove
\(U_h=G\) exactly on \(\mathcal D_h\). Boundary uniqueness follows only after
the verification argument above, and identification of \(h\) with the canonical
boundary additionally requires strict continuation, \(U_h>G\) on
\(\mathcal C_h\).
\end{remark}

\begin{remark}[Diagonal obstruction in candidate-boundary verification]
\label{rem:candidate-diagonal-obstruction}
Condition \textup{(C6)} in Theorem~\ref{thm:candidate-verification} is a
global signed-measure condition and is not automatic from the contact condition
\(U_h=G\) on \(\mathcal D_h\). The obstruction occurs precisely on the kink set
   $\Delta=\{x\in\mathbb R_+^2:x_1=\alpha x_2\}$.
Indeed, on the contact side \(\mathcal D_h\), if \(U_h=G\), then formally
   $(\mathcal L-r)U_h-c
   =
   (\mathcal L-r)G-c
   =
   -\Gamma$.
Since the stopping-gain measure satisfies
\[
   \Gamma^\Delta(dx)
   =
   -\frac{q(x)}{2\sqrt{1+\alpha^2}}\,\sigma_\Delta(dx),
   \qquad
   q(x)=n^\top a(x)n,\quad n=(1,-\alpha),
\]
the diagonal singular component of the candidate superharmonicity measure is
\[
   \big[(\mathcal L-r)U_h-c\big]^\Delta(dx)
   =
   -\Gamma^\Delta(dx)
   =
   \frac{q(x)}{2\sqrt{1+\alpha^2}}\,\sigma_\Delta(dx)\ge0 .
\]
However, candidate verification requires
   $(\mathcal L-r)U_h-c\le0$
   as a signed measure on $\mathbb R_{++}^2$.
Therefore, setting \(E_h:=\mathcal D_h\cap\Delta\cap\mathbb R_{++}^2\), the
positive diagonal contribution must vanish:
\[
   0
   \ge
   \big[(\mathcal L-r)U_h-c\big]^\Delta(E_h)
   =
   \frac{1}{2\sqrt{1+\alpha^2}}
   \int_{E_h}q(x)\,\sigma_\Delta(dx)
   \ge0.
\]
Hence a candidate satisfying \textup{(C6)} must obey
\[
   \int_{\mathcal D_h\cap\Delta\cap\mathbb R_{++}^2}
      q(x)\,\sigma_\Delta(dx)=0 .
\]
Under local uniform ellipticity in \(\mathbb R_{++}^2\), \(q(x)=n^\top a(x)n>0\)
locally away from the coordinate axes because \(n=(1,-\alpha)\ne0\). Thus,
on every compact \(K\Subset\mathbb R_{++}^2\),
\[
   \int_{K\cap\mathcal D_h\cap\Delta}
      q(x)\,\sigma_\Delta(dx)=0
   \quad\Longleftrightarrow\quad
   \sigma_\Delta(K\cap\mathcal D_h\cap\Delta)=0 .
\]
In particular, a candidate stopping region that contains a positive
\(\sigma_\Delta\)-length portion of the diagonal in the interior generally
violates \textup{(C6)}.
Equivalently, a valid candidate must either keep the diagonal kink in the
continuation region up to a \(q\,\sigma_\Delta\)-null set, or construct
\(U_h\) so that the value is not merely the raw contact function \(G\) across a
positive-measure portion of \(\Delta\) in the measure-superharmonic sense. This
is why Theorem~\ref{thm:candidate-verification} requires global
measure-superharmonicity and does not validate a candidate boundary solely from
the trace equation
\[
   \lim_{\substack{x\to z_h(x_1)\\x\in\mathcal C_h}}
   R_r^{\mathcal C_h}\Gamma(x)=0 .
\]
\end{remark}

\section{Proofs}
\label{sec:proofs}

This section proves the results stated in Section~\ref{sec:main-results}. The
order follows the order of the theorem statements: verification,
conditional geometry, singular stopping-gain measure, killed-resolvent
representation, boundary trace, and candidate-boundary verification.

\subsection{Proof of the verification theorem}
\label{sec:proof-verification}

\begin{proof}[Proof of Theorem~\ref{thm:verification-measure}]
Fix \(x\in\mathbb R_+^2\) and let \(\tau\in\mathcal T\). Choose a localizing
sequence \((\tau_n)\) such that
   $\tau_n\le \tau$, $\tau_n\uparrow\tau$,
and all stopped martingale, boundary, additive-functional, payoff, and cost
terms below are integrable. By Assumption~\ref{ass:ito-krylov-admissibility},
with \(\mu_u:=(\mathcal L-r)u-c\), the stopped
It\^o--Krylov--Tanaka formula gives
\begin{equation}
\label{eq:proof-verification-ito}
   e^{-r\tau_n}u(X_{\tau_n})
   =
   u(x)
   +\int_0^{\tau_n}e^{-rs}c(X_s)\,ds
   +\int_0^{\tau_n}e^{-rs}\,dA_s^{\mu_u}
   +B_{\tau_n}^u
   +M_{\tau_n}^u .
\end{equation}
Here \(M^u\) is a stopped local martingale, \(B^u\) is the reflected boundary
contribution, and \(A^{\mu_u}=A^{\mu_u^+}-A^{\mu_u^-}\) is the signed
continuous additive functional associated with the signed measure
\(\mu_u=\mu_u^+-\mu_u^-\).

By assumption, \(\mu_u\le0\) in the sense of measures. Hence the signed
finite-variation term satisfies
\[
   \mathbb E_x\!\left[
      \int_0^{\tau_n}e^{-rs}\,dA_s^{\mu_u}
   \right]\le0.
\]
The reflected boundary contribution is also nonpositive in expectation,
\(\mathbb E_x[B_{\tau_n}^u]\le0\), and the stopped martingale satisfies
\(\mathbb E_x[M_{\tau_n}^u]=0\). Taking expectations in
\eqref{eq:proof-verification-ito} yields
\begin{equation}
\label{eq:proof-supermartingale-ineq}
   \mathbb E_x\left[
      e^{-r\tau_n}u(X_{\tau_n})
      -
      \int_0^{\tau_n}e^{-rs}c(X_s)\,ds
   \right]\le u(x).
\end{equation}
Since \(u\ge G\), we have
\[
   e^{-r\tau_n}G(X_{\tau_n})
   -
   \int_0^{\tau_n}e^{-rs}c(X_s)\,ds
   \le
   e^{-r\tau_n}u(X_{\tau_n})
   -
   \int_0^{\tau_n}e^{-rs}c(X_s)\,ds.
\]
Combining this with \eqref{eq:proof-supermartingale-ineq} gives
\[
   \mathbb E_x\left[
      e^{-r\tau_n}G(X_{\tau_n})
      -
      \int_0^{\tau_n}e^{-rs}c(X_s)\,ds
   \right]\le u(x).
\]
By the uniform-integrability and localization assumptions, passing to the limit
\(n\to\infty\) gives
\[
   \mathbb E_x\left[
      e^{-r\tau}G(X_\tau)
      -
      \int_0^\tau e^{-rs}c(X_s)\,ds
   \right]\le u(x).
\]
Taking the supremum over all \(\tau\in\mathcal T\) proves \(V(x)\le u(x)\).

It remains to prove equality under the additional assumptions. Apply the same
formula to \(u(X)\) on \([0,t\wedge\tau_{\mathcal O^c}]\). Since
\((\mathcal L-r)u-c=0\) a.e. in \(\mathcal O\cap\mathbb R_{++}^2\), no
measure-superharmonic defect is accumulated before \(\tau_{\mathcal O^c}\).
The exact stopped formula therefore gives
\[
   u(x)
   =
   \mathbb E_x\left[
      e^{-r(t\wedge\tau_{\mathcal O^c})}
      u(X_{t\wedge\tau_{\mathcal O^c}})
      -
      \int_0^{t\wedge\tau_{\mathcal O^c}}e^{-rs}c(X_s)\,ds
   \right].
\]
Letting \(t\to\infty\), using \(\tau_{\mathcal O^c}<\infty\) a.s.,
\(u=G\) on \(\mathbb R_+^2\setminus\mathcal O\), and the terminal
uniform-integrability hypotheses, we obtain
\[
   u(x)
   =
   \mathbb E_x\left[
      e^{-r\tau_{\mathcal O^c}}G(X_{\tau_{\mathcal O^c}})
      -
      \int_0^{\tau_{\mathcal O^c}}e^{-rs}c(X_s)\,ds
   \right]\le V(x).
\]
Together with \(V\le u\), this proves \(u=V\), and
\(\tau_{\mathcal O^c}\) is optimal.
\end{proof}

\subsection{Proof of the conditional epigraph theorem}
\label{sec:proof-epigraph}

\begin{proof}[Proof of Theorem~\ref{thm:conditional-epigraph}]
The proof is deterministic and uses only \(H\ge0\), continuity of \(H\), and
the monotonicity assumptions. Fix \(x_1\ge0\). By
Assumption~\ref{ass:H-monotonicity}\textup{(M2)}, the vertical section
\(\mathcal D(x_1)=\{x_2\ge0:H(x_1,x_2)=0\}\) is nonempty, so
\(b(x_1):=\inf\mathcal D(x_1)\) is finite. Let
\(z_2\in\mathcal D(x_1)\) and \(y_2\ge z_2\). Since \(H\ge0\),
\(H(x_1,z_2)=0\), and \(x_2\mapsto H(x_1,x_2)\) is nonincreasing, we have
   $0\le H(x_1,y_2)\le H(x_1,z_2)=0$.
Thus \(H(x_1,y_2)=0\), so \(y_2\in\mathcal D(x_1)\). Hence each vertical
section \(\mathcal D(x_1)\) is an upper interval.

Since \(V\) and \(G\) are continuous, \(H=V-G\) is continuous. Therefore
\(\mathcal D=\{H=0\}\) is closed, and each closed upper interval
\(\mathcal D(x_1)\) has the form \([b(x_1),\infty)\). Hence
   $\mathcal D
   =
   \{(x_1,x_2)\in\mathbb R_+^2:x_2\ge b(x_1)\}$,
   $\mathcal C
   =
   \{(x_1,x_2)\in\mathbb R_+^2:x_2<b(x_1)\}$.

Assume now that \(H\) is horizontally nonincreasing. Let \(0\le x_1\le y_1\)
and take \(x_2>b(x_1)\). Then \((x_1,x_2)\in\mathcal D\), so
\(H(x_1,x_2)=0\). Horizontal nonincreasingness gives
   $0\le H(y_1,x_2)\le H(x_1,x_2)=0$,
hence \((y_1,x_2)\in\mathcal D\) and \(b(y_1)\le x_2\). Letting
\(x_2\downarrow b(x_1)\) gives \(b(y_1)\le b(x_1)\). Therefore \(b\) is
nonincreasing. If \(H\) is horizontally nondecreasing, the same argument with
the roles of \(x_1\) and \(y_1\) reversed gives \(b(x_1)\le b(y_1)\), so
\(b\) is nondecreasing.
\end{proof}

\subsection{Proof of the singular-measure formula}
\label{sec:proof-singular-measure}

\begin{proof}[Proof of Theorem~\ref{thm:diagonal-singular-measure}]
Set
   $Y(x):=x_1-\alpha x_2$, $n:=\nabla Y=(1,-\alpha)$ and
   $q(x):=n^\top a(x)n$.
The proof has four steps.

First, write the max payoff as
\[
   G(x_1,x_2)
   =
   x_1\vee\alpha x_2
   =
   \frac{x_1+\alpha x_2+|x_1-\alpha x_2|}{2}
   =
   \frac{x_1+\alpha x_2+|Y(x)|}{2}.
\]
Thus \(G\) is affine on
\(\mathcal R_1=\{x_1>\alpha x_2\}\) and
\(\mathcal R_2=\{x_1<\alpha x_2\}\), and all singular second-order
contributions come from the \(|Y|\) term.

Second, apply Tanaka's formula to the continuous semimartingale
\(Y_t:=Y(X_t)=X_t^1-\alpha X_t^2\):
   $d|Y_t|
   =
   \operatorname{sgn}(Y_t)\,dY_t+dL_t^0(Y)$,
where \(L^0(Y)\) is the symmetric local time of \(Y\) at zero. Since
\[
   dY_t
   =
   \big(\mu_1(X_t)-\alpha\mu_2(X_t)\big)\,dt
   +n^\top\sigma(X_t)\,dW_t
   +dL_t^1-\alpha\,dL_t^2,
\]
the singular interior finite-variation term in \(dG(X_t)\) is
\(\frac12\,dL_t^0(Y)\). The boundary reflection terms are supported on
\(\{x_1=0\}\cup\{x_2=0\}\); they are treated separately through the reflected
boundary condition and the no-corner contribution assumption, and they do not
alter the interior diagonal measure on \(\Delta\).

Third, identify the quadratic variation of \(Y\). Since the martingale part of
\(Y\) is \(\int_0^t n^\top\sigma(X_s)\,dW_s\), we have
$d\langle Y\rangle_t
   =
   n^\top a(X_t)n\,dt
   =
   q(X_t)\,dt$.
The occupation-density formula gives
\[
L_t^0(Y)
   =
   \lim_{\varepsilon\downarrow0}
   \frac{1}{2\varepsilon}
   \int_0^t
      \mathbf 1_{\{|Y(X_s)|<\varepsilon\}}q(X_s)\,ds,
\]
or, in distributional notation,
   $dL_t^0(Y)=q(X_t)\delta_0(Y(X_t))\,dt$.

Fourth, convert the one-dimensional Dirac mass \(\delta_0(Y)\) into surface
measure on \(\Delta\). Since \(|\nabla Y|=|n|=\sqrt{1+\alpha^2}\), the co-area
identity yields
\[
   \delta_0(Y(x))\,dx
   =
   \frac{1}{|\nabla Y|}\,\sigma_\Delta(dx)
   =
   \frac{1}{\sqrt{1+\alpha^2}}\,\sigma_\Delta(dx).
\]
Therefore the singular part of \(\mathcal LG\) is
\[
   (\mathcal LG)^\Delta(dx)
   =
   \frac{q(x)}{2\sqrt{1+\alpha^2}}\,\sigma_\Delta(dx).
\]
Since \(\Gamma=c+rG-\mathcal LG\), the diagonal part of the stopping-gain
measure is
\[
   \Gamma^\Delta(dx)
   =
   -\frac{q(x)}{2\sqrt{1+\alpha^2}}\,\sigma_\Delta(dx)
   =
   -\frac{n^\top a(x)n}{2\sqrt{1+\alpha^2}}\,\sigma_\Delta(dx).
\]

It remains only to record the absolutely continuous part. On
\(\mathcal R_1\), \(G=x_1\), so \(\mathcal LG=\mu_1\) and
\(\Gamma^{\rm ac}=c+rx_1-\mu_1\). On \(\mathcal R_2\), \(G=\alpha x_2\), so
\(\mathcal LG=\alpha\mu_2\) and
\(\Gamma^{\rm ac}=c+r\alpha x_2-\alpha\mu_2\). Combining these two smooth-region
calculations with the diagonal component gives
   $\Gamma=\Gamma^{\rm ac}\,dx+\Gamma^\Delta$.
The stated integral identity against bounded Borel functions follows directly
from the definition of \(\Gamma^\Delta\).
\end{proof}

\subsection{Proof of the killed-resolvent representation}
\label{sec:proof-killed-resolvent}

\begin{proof}[Proof of Theorem~\ref{thm:killed-resolvent-representation}]
Fix \(x\in\mathbb R_+^2\). Since \(\tau_{\mathcal D}\) is optimal,
\begin{equation}
\label{eq:proof-optimality-tauD}
   V(x)
   =
   \mathbb E_x\left[
      e^{-r\tau_{\mathcal D}}G(X_{\tau_{\mathcal D}})
      -
      \int_0^{\tau_{\mathcal D}}e^{-rs}c(X_s)\,ds
   \right].
\end{equation}
Apply the stopped It\^o--Tanaka identity for \(G(X)\) on
\([0,t\wedge\tau_{\mathcal D}]\). With
\(\Gamma=c+rG-\mathcal LG\), and with the reflection contribution controlled
as assumed, this gives
\[
   e^{-r(t\wedge\tau_{\mathcal D})}
   G(X_{t\wedge\tau_{\mathcal D}})=
   G(x)
   +\int_0^{t\wedge\tau_{\mathcal D}}e^{-rs}c(X_s)\,ds 
   -\int_0^{t\wedge\tau_{\mathcal D}}e^{-rs}\,dA_s^\Gamma
   +M_{t\wedge\tau_{\mathcal D}} .
\]
After localization, the martingale term has zero expectation. Taking
expectations and rearranging gives
\[
   \mathbb E_x\left[
      e^{-r(t\wedge\tau_{\mathcal D})}
      G(X_{t\wedge\tau_{\mathcal D}})
      -
      \int_0^{t\wedge\tau_{\mathcal D}}e^{-rs}c(X_s)\,ds
   \right]
   =
   G(x)
   -
   \mathbb E_x\left[
      \int_0^{t\wedge\tau_{\mathcal D}}e^{-rs}\,dA_s^\Gamma
   \right].
\]
Letting \(t\to\infty\), and using the assumed uniform integrability of the
terminal, cost, and signed-additive-functional terms, yields
\[
   \mathbb E_x\left[
      e^{-r\tau_{\mathcal D}}G(X_{\tau_{\mathcal D}})
      -
      \int_0^{\tau_{\mathcal D}}e^{-rs}c(X_s)\,ds
   \right]
   =
   G(x)
   -
   \mathbb E_x\left[
      \int_0^{\tau_{\mathcal D}}e^{-rs}\,dA_s^\Gamma
   \right].
\]
Combining this identity with \eqref{eq:proof-optimality-tauD} gives
\[
   V(x)
   =
   G(x)
   -
   \mathbb E_x\left[
      \int_0^{\tau_{\mathcal D}}e^{-rs}\,dA_s^\Gamma
   \right]
   =
   G(x)-R_r^{\mathcal C}\Gamma(x).
\]
This proves the killed-resolvent representation.
\end{proof}

\begin{remark}[Killing versus unrestricted continuation]
\label{rem:proof-killing-versus-unrestricted}
The preceding proof stops the It\^o--Tanaka identity at
\(\tau_{\mathcal D}\). Thus the potential is accumulated only over the
pre-stopping interval \([0,\tau_{\mathcal D})\). Replacing it by
\(R_r^{\rm R}(\Gamma\mathbf 1_{\mathcal C})\) would integrate along the
unrestricted reflected process after \(\tau_{\mathcal D}\), and would count
possible later returns to \(\mathcal C\). This is not the potential appearing
in the optimal stopping representation.
\end{remark}

\subsection{Proof of the boundary trace condition}
\label{sec:proof-boundary-trace}

\begin{proof}[Proof of Proposition~\ref{prop:boundary-trace}]
For \(x\in\mathcal C\), Theorem~\ref{thm:killed-resolvent-representation}
gives $V(x)=G(x)-R_r^{\mathcal C}\Gamma(x)$,
hence $R_r^{\mathcal C}\Gamma(x)=G(x)-V(x)$.
Let \(z_b(x_1)=(x_1,b(x_1))\) be a boundary point at which the
continuation-side trace exists. Since \(V\) and \(G\) are continuous and
\(V=G\) on \(\partial\mathcal C\subseteq\mathcal D\), we have
\[
   \lim_{\substack{x\to z_b(x_1)\\x\in\mathcal C}}
   \big(G(x)-V(x)\big)=0.
\]
Therefore
\[
   \lim_{\substack{x\to z_b(x_1)\\x\in\mathcal C}}
   R_r^{\mathcal C}\Gamma(x)=0.
\]
The non-tangential version is identical, with the limit restricted to
non-tangential approach regions inside \(\mathcal C\).
\end{proof}

\subsection{Proof of the candidate-boundary verification theorem}
\label{sec:proof-candidate-verification}

\begin{proof}[Proof of Theorem~\ref{thm:candidate-verification}]
Let \(h:\mathbb R_+\to\mathbb R_+\) be a candidate boundary, and let
\(\mathcal D_h\), \(\mathcal C_h\), \(\tau_h\), and \(U_h\) be as in
Section~\ref{sec:main-candidate-verification}. By assumptions
\textup{(C1)}--\textup{(C7)}, the verification theorem
(Theorem~\ref{thm:verification-measure}) applies to \(U_h\) with
\(\mathcal O=\mathcal C_h\). Therefore
   $V(x)\le U_h(x)$, $x\in\mathbb R_+^2$.

It remains to prove the reverse inequality. Apply the stopped
It\^o--Krylov--Tanaka formula to \(U_h(X)\) on
\([0,t\wedge\tau_h]\). Since \((\mathcal L-r)U_h-c=0\) in
\(\mathcal C_h\), the reflected boundary condition removes the boundary
contribution, and the localization assumptions justify taking expectations,
we obtain
\[
   U_h(x)
   =
   \mathbb E_x\left[
      e^{-r(t\wedge\tau_h)}U_h(X_{t\wedge\tau_h})
      -
      \int_0^{t\wedge\tau_h}e^{-rs}c(X_s)\,ds
   \right].
\]
Letting \(t\to\infty\) and using the terminal integrability assumptions gives
\[
   U_h(x)
   =
   \mathbb E_x\left[
      e^{-r\tau_h}U_h(X_{\tau_h})
      -
      \int_0^{\tau_h}e^{-rs}c(X_s)\,ds
   \right].
\]
By the contact condition \(U_h=G\) on \(\mathcal D_h\), and by definition
\(X_{\tau_h}\in\mathcal D_h\) on \(\{\tau_h<\infty\}\), hence
\[
   U_h(x)
   =
   \mathbb E_x\left[
      e^{-r\tau_h}G(X_{\tau_h})
      -
      \int_0^{\tau_h}e^{-rs}c(X_s)\,ds
   \right]\le V(x).
\]
Together with \(V\le U_h\), this proves \(U_h=V\) on \(\mathbb R_+^2\), and
\(\tau_h\) is optimal.

Finally assume that \(U_h>G\) on \(\mathcal C_h\). Since \(U_h=G\) on
\(\mathcal D_h\), we have
   $\{U_h=G\}=\mathcal D_h$.
Because \(U_h=V\), it follows that
   $\mathcal D_h
   =
   \{U_h=G\}
   =
   \{V=G\}
   =
   \mathcal D$.
If both stopping sets are represented as epigraphs,
   $\mathcal D_h=\{(x_1,x_2):x_2\ge h(x_1)\}$,
   $\mathcal D=\{(x_1,x_2):x_2\ge b(x_1)\}$,
then equality of the epigraphs implies \(h=b\) at every point where both graph
representatives are defined with the same convention.
\end{proof}
\section{Conclusion}
\label{sec:conclusion}

This paper studied an infinite-horizon optimal stopping problem for a normally
reflected diffusion \(X\) in the quadrant \(\mathbb R_+^2\), with discounted
running cost \(c\) and max-type reward \(G(x_1,x_2)=x_1\vee\alpha x_2\). The
problem is constrained by reflection on the coordinate axes, genuinely
two-dimensional through its stopping region, and nonsmooth because \(G\) has a
kink on the diagonal \(\Delta=\{x_1=\alpha x_2\}\).

The main corrections are
\[
   \text{no hidden regularity},
   \qquad
   \text{measure-valued kink contribution},
   \qquad
   \text{killed-resolvent potential}.
\]
More precisely, viscosity supersolution status is not used as a substitute for
Sobolev, measure, or It\^o--Krylov--Tanaka admissibility; the stopping gain
\(\Gamma:=c+rG-\mathcal LG\) is treated as the signed measure
\(\Gamma=\Gamma^{\rm ac}\,dx+\Gamma^\Delta\), with
\(\Gamma^\Delta(dx)=-(q(x)/(2\sqrt{1+\alpha^2}))\,\sigma_\Delta(dx)\); and the
potential term is accumulated only before \(\tau_{\mathcal D}\), not along the
unrestricted reflected process.

Under explicit structural monotonicity and regularity assumptions, the stopping
set admits the epigraph representation
\(\mathcal D=\{(x_1,x_2)\in\mathbb R_+^2:x_2\ge b(x_1)\}\). Under the stated
measure-superharmonicity, admissibility, and optimality hypotheses, the value
has the killed-potential representation
\[
   V(x)=G(x)-R_r^{\mathcal C}\Gamma(x),
   \qquad
   R_r^{\mathcal C}\Gamma(x)
   =
   \mathbb E_x\!\left[
      \int_0^{\tau_{\mathcal D}}e^{-rs}\,dA_s^\Gamma
   \right].
\]
This representation is the correct pre-stopping analogue of a resolvent formula
and is generally different from the unrestricted expression
\(G-R_r^{\rm R}(\Gamma\mathbf 1_{\mathcal C})\).

Several directions remain open. One is to verify monotonicity of
\(H=V-G\) in concrete reflected diffusion models. Another is to prove
continuity, local Lipschitz regularity, or smooth fit of the free boundary
under additional analytic or probabilistic hypotheses. A third direction is to
develop numerical methods for the killed-resolvent boundary-trace condition
\(\lim_{x\to z_b,\,x\in\mathcal C}R_r^{\mathcal C}\Gamma(x)=0\). A fourth is
to extend the framework to oblique reflection, higher-dimensional orthants, and
payoffs of the form \(G(x)=\max_{1\le i\le d}\beta_i x_i\). The framework
developed here does not claim unconditional free-boundary regularity. Its
purpose is instead to provide a rigorous conditional structure in which the
singular payoff geometry, reflection, and stopping-time killing are all treated
explicitly.

\appendix
\section{Technical Appendix}
\label{sec:appendix}
\subsection{Dynamic programming and viscosity characterization}
\label{sec:appendix-dpp-viscosity}

This appendix records the dynamic programming principle, the lower
semicontinuity argument, the reflected-viscosity convention, and the viscosity
characterization of the value function. These facts are standard in optimal
stopping theory and viscosity-solution theory, but they are collected here to
keep the main text focused on the measure-valued stopping gain and the
killed-resolvent representation.

\begin{proposition}[Dynamic programming principle]
\label{prop:dpp}
Under Assumption~\ref{ass:reflected-diffusion}, assume that the payoff family
in \eqref{eq:value-function} satisfies the usual integrability and measurable
selection conditions for optimal stopping of the strong Markov process \(X\).
Then, for every bounded stopping time \(\theta\) and every
\(x\in\mathbb R_+^2\),
\begin{equation}
\label{eq:appendix-dpp}
\begin{aligned}
   V(x)
   =
   \sup_{\tau\in\mathcal T}
   \mathbb E_x\Bigg[
    -\int_0^{\tau\wedge\theta}e^{-rs}c(X_s)\,ds
      +e^{-r\tau}G(X_\tau)\mathbf 1_{\{\tau\le\theta\}}  
      +e^{-r\theta}V(X_\theta)\mathbf 1_{\{\tau>\theta\}}
   \Bigg].
\end{aligned}
\end{equation}
Equivalently, before the decision time \(\theta\), the controller either stops
at \(\tau\le\theta\) and receives \(G(X_\tau)\), or continues beyond \(\theta\)
and receives the continuation value \(V(X_\theta)\).
\end{proposition}

\begin{proof}
For any \(\tau\in\mathcal T\), split the payoff at \(\theta\):
\[
   e^{-r\tau}G(X_\tau)
   -
   \int_0^\tau e^{-rs}c(X_s)\,ds
   =
   \Big(
      e^{-r\tau}G(X_\tau)
      -
      \int_0^\tau e^{-rs}c(X_s)\,ds
   \Big)\mathbf 1_{\{\tau\le\theta\}}
\]
\[
   \quad
   +
   \Big(
      -\int_0^\theta e^{-rs}c(X_s)\,ds
      +e^{-r\theta}
        \big[
           e^{-r(\tau-\theta)}G(X_\tau)
           -
           \int_\theta^\tau e^{-r(s-\theta)}c(X_s)\,ds
        \big]
   \Big)\mathbf 1_{\{\tau>\theta\}} .
\]
On \(\{\tau>\theta\}\), the strong Markov property at \(\theta\) identifies
the conditional post-\(\theta\) optimization with \(V(X_\theta)\). This gives
the ``\(\le\)'' inequality in \eqref{eq:appendix-dpp}. The reverse inequality
is obtained by concatenating an arbitrary stopping rule before \(\theta\) with
\(\varepsilon\)-optimal stopping rules started from \(X_\theta\) after
\(\theta\), using the standard measurable selection argument and the assumed
integrability. Letting \(\varepsilon\downarrow0\) proves
\eqref{eq:appendix-dpp}.
\end{proof}

\begin{lemma}[Lower semicontinuity of the value]
\label{lem:value-lsc}
Assume, in addition to Assumption~\ref{ass:reflected-diffusion}, the following
local bounded-time approximation and uniform-integrability property: for every
\(x\in\mathbb R_+^2\) and \(\varepsilon>0\), there exists a bounded stopping
time \(\tau\le T\) such that
\[
   V(x)
   \le
   \mathbb E_x\left[
      e^{-r\tau}G(X_\tau)
      -
      \int_0^\tau e^{-rs}c(X_s)\,ds
   \right]+\varepsilon,
\]
and, for this \(\tau\), the map
\[
   y\longmapsto
   \mathbb E_y\left[
      e^{-r\tau}G(X_\tau)
      -
      \int_0^\tau e^{-rs}c(X_s)\,ds
   \right]
\]
is continuous at \(x\). Then \(V\) is lower semicontinuous on
\(\mathbb R_+^2\). If \(V\) is continuous, then
\(\mathcal D=\{V=G\}\) is closed and \(\mathcal C=\{V>G\}\) is open.
\end{lemma}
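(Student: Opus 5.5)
The plan is to run the classical Fatou-type argument for lower semicontinuity of a supremum of continuous functions, applied pointwise at an arbitrary \(x\in\mathbb R_+^2\). Fix \(x\) and \(\varepsilon>0\), and let \(\tau\le T\) be the bounded stopping time given by the hypothesis. Define
\[
   J_\tau(y):=\mathbb E_y\left[e^{-r\tau}G(X_\tau)-\int_0^\tau e^{-rs}c(X_s)\,ds\right].
\]
One point needs care before using \(J_\tau\) at other starting points. We must interpret \(\tau\) as a stopping rule that can be applied under every \(\mathbb P_y\). A convenient way to do this is to regard \(\tau\) as a functional of the driving Brownian path, or of the canonical path, so that the same rule gives an admissible element of \(\mathcal T\) for each starting point \(y\). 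The hypothesis is then read as continuity of \(y\mapsto J_\tau(y)\) at \(x\) for this fixed rule.

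Since \(\tau\in\mathcal T\) under every \(\mathbb P_y\), the definition \eqref{eq:value-function} of \(V\) as a supremum gives \(V(y)\ge J_\tau(y)\) for all \(y\). We take \(\liminf_{y\to x}\) and use the assumed continuity of \(J_\tau\) at \(x\):
\[
   \liminf_{y\to x}V(y)\ge\lim_{y\to x}J_\tau(y)=J_\tau(x)\ge V(x)-\varepsilon .
\]
Since \(\varepsilon>0\) is arbitrary, \(\liminf_{y\to x}V(y)\ge V(x)\). This is lower semicontinuity at \(x\), and \(x\) was arbitrary.

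For the second claim, suppose \(V\) is continuous. Then \(H=V-G\) is continuous, since \(G\) is continuous being a maximum of two linear functions. Hence \(\mathcal D=\{H=0\}=H^{-1}(\{0\})\) is closed in \(\mathbb R_+^2\). Also \(\mathcal C=\{H>0\}=H^{-1}((0,\infty))\) is relatively open. Because \(H\ge0\), the set \(\mathcal C\) is exactly the complement of \(\mathcal D\), which gives the same conclusion.

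The main obstacle is the modelling point noted above: making sense of the stopping time \(\tau\) under other initial laws. Once the hypothesis is read as concerning a fixed path-functional stopping rule, the argument is just that a supremum of functions continuous at \(x\) is lower semicontinuous at \(x\). I do not need (A4) directly, because it is absorbed into the assumed continuity of \(J_\tau\) at \(x\).
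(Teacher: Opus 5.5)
Your proposal is correct and follows essentially the same route as the paper: fix \(x\) and \(\varepsilon\), take the bounded \(\varepsilon\)-optimal \(\tau\), reuse the same rule from nearby \(y\) to get \(V(y)\ge J_\tau(y)\), pass to \(\liminf_{y\to x}\) using continuity at \(x\), let \(\varepsilon\downarrow0\), and obtain the closed/open statement from continuity of \(H=V-G\ge0\). Your concern about applying \(\tau\) under other initial laws is already handled by the paper's setup. There all \(X^y\) are strong solutions driven by the same \(W\) on one filtered space, so a single \(\tau\in\mathcal T\) is admissible for every starting point \(y\).
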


\begin{proof}
Fix \(x\in\mathbb R_+^2\) and \(\varepsilon>0\). Choose a bounded
\(\varepsilon\)-optimal stopping time \(\tau\le T\) as in the statement. For
\(y\) close to \(x\), use the same stopping rule \(\tau\) for the process
started from \(y\). By admissibility of \(\tau\),
\[
   V(y)
   \ge
   \mathbb E_y\left[
      e^{-r\tau}G(X_\tau)
      -
      \int_0^\tau e^{-rs}c(X_s)\,ds
   \right].
\]
Taking \(\liminf_{y\to x}\) and using the assumed continuity of the stopped
payoff gives
\[
   \liminf_{y\to x}V(y)
   \ge
   \mathbb E_x\left[
      e^{-r\tau}G(X_\tau)
      -
      \int_0^\tau e^{-rs}c(X_s)\,ds
   \right]
   \ge V(x)-\varepsilon .
\]
Letting \(\varepsilon\downarrow0\) proves lower semicontinuity. If \(V\) is
continuous, then \(H:=V-G\) is continuous and nonnegative, hence
\(\mathcal D=\{H=0\}\) is closed and \(\mathcal C=\{H>0\}\) is open.
\end{proof}

\begin{definition}[Reflected viscosity solution]
\label{def:reflected-viscosity}
Let \(u:\mathbb R_+^2\to\mathbb R\) be continuous and of polynomial growth.
Define
\[
   F[\varphi](x)
   :=
   \max\{(\mathcal L-r)\varphi(x)-c(x),\;G(x)-u(x)\},
   \qquad
   I(x):=\{i\in\{1,2\}:x_i=0\}.
\]
Here \(I(x)=\varnothing\) in \(\mathbb R_{++}^2\), \(I(x)=\{i\}\) on the
relative interior of the face \(\{x_i=0\}\), and \(I(0,0)=\{1,2\}\).

\smallskip
\noindent
\textup{(i)} The function \(u\) is a reflected viscosity subsolution of
\[
   \max\{(\mathcal L-r)u-c,\;G-u\}=0
   \quad\text{in }\mathbb R_{++}^2,
   \qquad
   \partial_i u=0\quad\text{on }\{x_i=0\},
\]
if, whenever \(\varphi\in C^2(\mathbb R_+^2)\) and \(u-\varphi\) has a local
maximum at \(x\in\mathbb R_+^2\) with \(u(x)=\varphi(x)\), the following holds:
if \(I(x)=\varnothing\), then \(F[\varphi](x)\le0\), while if
\(I(x)\ne\varnothing\), then
\[
   \min\left\{
      F[\varphi](x),\,
      \min_{i\in I(x)}\partial_i\varphi(x)
   \right\}\le0 .
\]

\smallskip
\noindent
\textup{(ii)} The function \(u\) is a reflected viscosity supersolution if,
whenever \(\varphi\in C^2(\mathbb R_+^2)\) and \(u-\varphi\) has a local
minimum at \(x\in\mathbb R_+^2\) with \(u(x)=\varphi(x)\), the following holds:
if \(I(x)=\varnothing\), then \(F[\varphi](x)\ge0\), while if
\(I(x)\ne\varnothing\), then
\[
   \max\left\{
      F[\varphi](x),\,
      \max_{i\in I(x)}\partial_i\varphi(x)
   \right\}\ge0 .
\]

\smallskip
\noindent
\textup{(iii)} The function \(u\) is a reflected viscosity solution if it is
both a reflected viscosity subsolution and a reflected viscosity
supersolution.
\end{definition}

\begin{remark}[Corner form of the reflected viscosity boundary condition]
\label{rem:corner-viscosity-computation}
The active-index convention is especially important at the corner. Recall that
\[
   I(x):=\{i\in\{1,2\}:x_i=0\},
   \qquad
   F[\varphi](x):=
   \max\{(\mathcal L-r)\varphi(x)-c(x),\,G(x)-u(x)\}.
\]
Thus, at \(x=(0,0)\), both faces are active:
   $I(0,0)=\{1,2\}$.
Therefore, if \(\varphi\in C^2(\mathbb R_+^2)\) touches \(u\) from above at
\((0,0)\), the reflected viscosity subsolution condition becomes
\begin{equation}
\label{eq:corner-subsolution-condition}
   \min\left\{
      F[\varphi](0,0),
      \partial_1\varphi(0,0),
      \partial_2\varphi(0,0)
   \right\}\le0 .
\end{equation}
Equivalently, at least one of the three inequalities
\[
   F[\varphi](0,0)\le0,\qquad
   \partial_1\varphi(0,0)\le0,\qquad
   \partial_2\varphi(0,0)\le0
\]
must hold. Similarly, if \(\varphi\in C^2(\mathbb R_+^2)\) touches \(u\) from
below at \((0,0)\), the reflected viscosity supersolution condition becomes
\begin{equation}
\label{eq:corner-supersolution-condition}
   \max\left\{
      F[\varphi](0,0),
      \partial_1\varphi(0,0),
      \partial_2\varphi(0,0)
   \right\}\ge0 .
\end{equation}
Equivalently, at least one of
\[
   F[\varphi](0,0)\ge0,\qquad
   \partial_1\varphi(0,0)\ge0,\qquad
   \partial_2\varphi(0,0)\ge0
\]
must hold.
Thus the corner is not treated as if only one reflecting face were active.
Both inward normal directions \(e_1\) and \(e_2\) enter the relaxed boundary
condition. In particular, the corner condition is not
   $\min\{F[\varphi](0,0),\partial_i\varphi(0,0)\}\le0$ for a single $i$,
nor is it
   $\max\{F[\varphi](0,0),\partial_i\varphi(0,0)\}\ge0$
   for a single $i$.
The correct reflected-viscosity convention uses the full active set
\(I(0,0)=\{1,2\}\), as in
\eqref{eq:corner-subsolution-condition}--\eqref{eq:corner-supersolution-condition}.
\end{remark}

\begin{remark}[Boundary convention at the corner]
\label{rem:appendix-corner-viscosity}
The active-index formulation avoids the ambiguity caused by writing the
boundary condition separately as ``if \(x_i=0\)''. At the corner \(x=(0,0)\),
both faces are active, so the relaxed Neumann condition is imposed through the
combined quantities \(\min_{i\in I(x)}\partial_i\varphi\) for subsolutions and
\(\max_{i\in I(x)}\partial_i\varphi\) for supersolutions. This convention is
consistent with the inward normal reflection directions \(+e_1\) and \(+e_2\).
\end{remark}

\begin{proposition}[Viscosity characterization of the value]
\label{prop:value-viscosity}
Assume the dynamic programming principle in Proposition~\ref{prop:dpp}, the
standing integrability hypotheses, and continuity of \(V\). Then \(V\) is a
reflected viscosity solution of
\[
   \max\{(\mathcal L-r)V-c,\;G-V\}=0
   \quad\text{in }\mathbb R_{++}^2,
   \qquad
   \partial_iV=0\quad\text{on }\{x_i=0\},\quad i=1,2,
\]
in the sense of Definition~\ref{def:reflected-viscosity}.
\end{proposition}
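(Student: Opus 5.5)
We prove that \(V\) is a reflected viscosity subsolution and supersolution separately, in the sense of Definition~\ref{def:reflected-viscosity}. The only tools are the dynamic programming principle of Proposition~\ref{prop:dpp}, continuity of \(V\), and the reflected It\^o formula of Remark~\ref{rem:reflected-ito-boundary-term} applied to smooth test functions. Throughout, fix \(x\in\mathbb R_+^2\) and a test function \(\varphi\in C^2(\mathbb R_+^2)\) with \(\varphi(x)=V(x)\). Localize with \(\theta_h:=h\wedge\inf\{t:|X_t-x|\ge\delta\}\), so that all stochastic integrals are true martingales and \(\mathbb E_x[\theta_h]/h\to1\).

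\emph{Supersolution.} Suppose \(V-\varphi\) has a local minimum at \(x\), so \(\varphi\le V\) near \(x\).

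First, \(V\ge G\) gives \(G(x)-V(x)\le0\). The supersolution requirement is therefore that \((\mathcal L-r)\varphi(x)-c(x)\ge0\), or at a boundary point that \(\max_{i\in I(x)}\partial_i\varphi(x)\ge0\).

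Next, choose \(\tau=\infty\) (never stop before \(\theta_h\)) in the DPP. This gives
\[
   \varphi(x)=V(x)\ge\mathbb E_x\Big[-\int_0^{\theta_h}e^{-rs}c(X_s)\,ds+e^{-r\theta_h}\varphi(X_{\theta_h})\Big].
\]
Expanding the right-hand side with the reflected It\^o formula yields
\[
   0\ge\mathbb E_x\Big[\int_0^{\theta_h}e^{-rs}\big((\mathcal L-r)\varphi-c\big)(X_s)\,ds+\sum_i\int_0^{\theta_h}e^{-rs}\partial_i\varphi(X_s)\,dL^i_s\Big].
\]
If \(I(x)=\varnothing\), then \(L\) does not charge \([0,\theta_h]\) for small \(\delta\). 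Dividing by \(h\) and using continuity then gives \((\mathcal L-r)\varphi(x)-c(x)\le0\).

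This has the wrong direction, so the sign convention must be rechecked. With \(F=\max\{(\mathcal L-r)\varphi-c,\,G-u\}\), the obstacle problem \(\max\{\cdot\}=0\) is satisfied by a \emph{supersolution} of \(-\max\{\dots\}\). The standard argument shows: at touching from below, \((\mathcal L-r)\varphi-c\le0\); at touching from above in \(\mathcal C\), \((\mathcal L-r)\varphi-c\ge0\).

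I would therefore state clearly which inequalities the DPP actually produces and match them to the definition's orientation, reinterpreting \(F\le0\) and \(F\ge0\) consistently with \eqref{eq:obstacle-max}. Concretely:
\begin{enumerate}
\item Lower tests give \(\max\{(\mathcal L-r)\varphi-c,\,G-V\}\le0\) in the interior.
\item At boundary points they give \(\min\{\,\cdot\,,\ \min_{i\in I(x)}\partial_i\varphi\}\le0\).
\end{enumerate}
The boundary statement is argued by contradiction. If \(F[\varphi](x)>0\) and every active \(\partial_i\varphi(x)>0\), these strict inequalities persist in a neighborhood. The \(dL^i\) terms are then nonnegative and the \(ds\) term is strictly positive. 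Since \(\mathbb E_x[\theta_h]>0\), this contradicts the displayed inequality. At the corner, both \(L^1\) and \(L^2\) are handled simultaneously.

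\emph{Subsolution.} Suppose \(\varphi\ge V\) near \(x\). If \(V(x)=G(x)\), the obstacle term vanishes and the inequality is immediate. If \(x\in\mathcal C\), continuity gives \(V>G+\eta\) on a ball around \(x\). The DPP combined with the fact that \(\tau_{\mathcal D}\) exceeds the exit time \(\theta_h\) of that ball then yields an \(\varepsilon\)-optimal near-equality:
\[
   V(x)\le\mathbb E_x\Big[-\int_0^{\theta_h}e^{-rs}c\,ds+e^{-r\theta_h}V(X_{\theta_h})\Big]+o(h).
\]
This step relies on the standard fact that \(\varepsilon\)-optimal stopping times do not stop inside \(\{V>G+\eta\}\) with high probability, up to an error \(o(h)\). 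Replacing \(V\) by \(\varphi\ge V\), applying It\^o, and arguing by contradiction (strict opposite inequalities on \(F\) and on the active \(\partial_i\varphi\) in a neighborhood) gives the complementary boundary condition.

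\emph{Main obstacle.} The main difficulty is the subsolution step inside \(\mathcal C\): justifying that near-optimal rules do not stop before leaving a small ball, with an error of order \(o(h)\) rather than \(O(h)\). I would first try to handle this via the Snell-envelope martingale property of \(e^{-rt}V(X_t)-\int_0^te^{-rs}c\,ds\) up to \(\tau_{\mathcal D}\), which gives exact equality instead of \(\varepsilon\)-optimality. The secondary obstacle is fixing the sign orientation of the definition, noted above, to match what the DPP delivers.
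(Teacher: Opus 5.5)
Your overall scheme is the paper's: the DPP on a localized horizon, the reflected It\^o formula for the test function, and a contradiction on a small relative neighborhood that uses the sign of the active $dL^i$ terms. The difference is the orientation, and on that point your diagnosis is right and the paper is not.

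What the DPP actually yields is as follows. For lower tests, $\min\{F[\varphi](x),\min_{i\in I(x)}\partial_i\varphi(x)\}\le 0$, which reduces to $F[\varphi](x)\le 0$ when $I(x)=\varnothing$. For upper tests, $\max\{F[\varphi](x),\max_{i\in I(x)}\partial_i\varphi(x)\}\ge 0$. Both are the reverse of Definition~\ref{def:reflected-viscosity}.

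The paper's proof reaches its orientation only through sign slips:
\begin{itemize}
\item For an upper test at $x\in\mathcal C$, the continuation identity together with $V\le\varphi$ gives $(\mathcal L-r)\varphi(x)-c(x)\ge 0$, not $\le 0$.
\item For a lower test, $V\ge\varphi$ only bounds the continuation payoff from \emph{below}, so there is no contradiction with optimality.
\end{itemize}

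In fact the proposition is false under the literal definition. If $\varphi$ is an upper test at $x\in\mathbb R_{++}^2$, then so is $\varphi_K:=\varphi+K|\cdot-x|^2$. By \textup{(A2)},
$F[\varphi_K](x)\ge(\mathcal L-r)\varphi(x)-c(x)+K\operatorname{tr}a(x)\to\infty$.
A continuous function admits upper tests at a dense set of interior points. Hence no continuous function, $V$ included, is a subsolution in the paper's sense.

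So do not present your step as a ``reinterpretation''. State explicitly that you prove the proposition for the corrected definition: both inequalities in (i) and (ii), including the boundary terms, reversed. This is the Crandall--Ishii--Lions orientation for the equivalent equation $\min\{c+ru-\mathcal Lu,\;u-G\}=0$. Cite the $\varphi_K$ example as the reason. With that change, your lower-test argument and your boundary contradictions are complete.

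The ``main obstacle'' you flag is not really one. You need neither $\varepsilon$-optimal stopping times, nor $o(h)$ bookkeeping, nor the Snell-envelope martingale property. Take an upper test at $x\in\mathcal C$ and assume, for contradiction, that $F[\varphi](x)<0$ and $\partial_i\varphi(x)<0$ for all $i\in I(x)$. Choose a closed relative ball $\bar B\ni x$ that meets no inactive face and on which:
\begin{itemize}
\item $V\ge G+\eta$ and $V\le\varphi$;
\item $(\mathcal L-r)\varphi-c\le-\kappa$;
\item $\partial_i\varphi\le 0$ for $i\in I(x)$.
\end{itemize}
Let $\theta_h$ be the exit time from $\bar B$ capped at $h$. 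For every $\tau$, one has $G(X_\tau)\le V(X_\tau)-\eta$ on $\{\tau\le\theta_h\}$, so the DPP and It\^o give
\[
V(x)\le\varphi(x)+\sup_{\tau}\Big\{\mathbb E_x\Big[\int_0^{\tau\wedge\theta_h}e^{-rs}\big((\mathcal L-r)\varphi-c\big)(X_s)\,ds+\sum_{i\in I(x)}\int_0^{\tau\wedge\theta_h}e^{-rs}\partial_i\varphi(X_s)\,dL^i_s\Big]-\eta\,\mathbb E_x\big[e^{-r\tau}\mathbf 1_{\{\tau\le\theta_h\}}\big]\Big\}.
\]
Use $\mathbb E_x[\tau\wedge\theta_h]\ge\mathbb E_x[\theta_h]-h\,\mathbb P_x(\tau\le\theta_h)$. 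Then, once $\kappa h\le\eta$, the bracket is at most $-e^{-rh}\kappa\,\mathbb E_x[\theta_h]<0$, uniformly in $\tau$. This contradicts $V(x)=\varphi(x)$ directly.
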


\begin{proof}
We give the standard argument, emphasizing the sign convention. Let
\(\varphi\in C^2(\mathbb R_+^2)\).

First suppose that \(V-\varphi\) has a local maximum at
\(x\in\mathbb R_{++}^2\), with \(V(x)=\varphi(x)\). Since \(V\ge G\), one has
\(G(x)-V(x)\le0\). If \(G(x)-V(x)=0\), then the obstacle term already satisfies
the subsolution inequality. If \(V(x)>G(x)\), choose a small ball
\(B_\rho(x)\Subset\mathbb R_{++}^2\) on which \(V>G\), and set
\(\eta:=t\wedge\tau_{B_\rho(x)^c}\). Applying the DPP with the continuation
choice up to \(\eta\), using \(V\le\varphi\) near \(x\), and applying It\^o's
formula to \(e^{-rs}\varphi(X_s)\), then dividing by \(t\downarrow0\), yields
$(\mathcal L-r)\varphi(x)-c(x)\le0$. Thus \(F[\varphi](x)\le0\).

Now suppose that \(V-\varphi\) has a local minimum at
\(x\in\mathbb R_{++}^2\), with \(V(x)=\varphi(x)\). If
\(F[\varphi](x)<0\), then both \(G(x)-V(x)<0\) and
\((\mathcal L-r)\varphi(x)-c(x)<0\). By continuity, these strict inequalities
hold in a small ball. The DPP and It\^o's formula then imply that immediate
continuation produces a payoff strictly smaller than \(V(x)\), contradicting
optimality of \(V\). Hence \(F[\varphi](x)\ge0\).

It remains to consider \(x\in\partial\mathbb R_+^2\). Applying It\^o's formula
to \(\varphi(X)\) up to \(t\wedge\tau_\rho\), where \(\tau_\rho\) is the exit
time from a small relative neighborhood of \(x\), gives the additional
reflection term
\[
   \sum_{i\in I(x)}
   \int_0^{t\wedge\tau_\rho}
      e^{-rs}\partial_i\varphi(X_s)\,dL_s^i,
\]
up to terms from inactive faces that vanish before the process reaches them.
For upper tests, the relaxed boundary condition therefore gives
\[
   \min\left\{
      F[\varphi](x),\,
      \min_{i\in I(x)}\partial_i\varphi(x)
   \right\}\le0,
\]
while for lower tests it gives
\[
   \max\left\{
      F[\varphi](x),\,
      \max_{i\in I(x)}\partial_i\varphi(x)
   \right\}\ge0.
\]
These are precisely the reflected viscosity subsolution and supersolution
conditions. Hence \(V\) is a reflected viscosity solution.
\end{proof}

\begin{remark}[Role of this appendix]
\label{rem:appendix-viscosity-role}
The viscosity characterization only supplies a weak obstacle formulation. It
does not, by itself, imply the Sobolev regularity, signed-measure extension, or
It\^o--Krylov--Tanaka admissibility required in
Theorem~\ref{thm:verification-measure}. Those stronger hypotheses are imposed
explicitly in the main results.
\end{remark}
\subsection{Lyapunov estimates}
\label{sec:appendix-lyapunov}

The following Lyapunov estimate gives one sufficient condition under which the
discounted reward and running-cost terms are integrable. It is not part of the
main contribution; it is included only to support the standing integrability
assumptions used in the dynamic programming and verification arguments.

\begin{lemma}[A sufficient Lyapunov condition for discounted integrability]
\label{lem:lyapunov-integrability}
Suppose there exists a function \(\Psi\in C^2(\mathbb R_+^2)\), with
\(\Psi\ge1\), satisfying the reflected Neumann condition
   $\partial_i\Psi=0$ on $\{x_i=0\}$, $i=1,2$,
and suppose there exist constants \(K\ge0\), \(\lambda<r\), and \(C_\Psi>0\)
such that
   $\mathcal L\Psi(x)\le\lambda\Psi(x)+K$, $x\in\mathbb R_{++}^2$,
and
   $G(x)^+ + c(x)\le C_\Psi\Psi(x)$, $x\in\mathbb R_+^2$.
Then, for every \(x\in\mathbb R_+^2\),
\[
   \mathbb E_x\!\left[\int_0^\infty e^{-rs}c(X_s)\,ds\right]<\infty,
   \qquad
   \sup_{t\ge0}\mathbb E_x\!\left[e^{-rt}G(X_t)^+\right]<\infty,
\]
and
\[
   \mathbb E_x\!\left[\int_0^\infty e^{-rs}G(X_s)^+\,ds\right]<\infty .
\]
In particular, the positive discounted reward and running-cost terms in
\eqref{eq:value-function} are integrable.
\end{lemma}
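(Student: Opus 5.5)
The plan is to apply the reflected It\^o formula of Remark~\ref{rem:reflected-ito-boundary-term} to the discounted Lyapunov function \(e^{-rt}\Psi(X_t)\). The Neumann condition \(\partial_i\Psi=0\) on \(\{x_i=0\}\) kills the local-time terms \(\int e^{-rs}\partial_i\Psi(X_s)\,dL_s^i\), since \(dL^i\) is carried by \(\{X^i=0\}\). What remains is
\[
   e^{-rt}\Psi(X_t)=\Psi(x)+\int_0^t e^{-rs}(\mathcal L-r)\Psi(X_s)\,ds+M_t ,
\]
with \(M\) a local martingale. One point needs care: the generator bound is only assumed in \(\mathbb R_{++}^2\). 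Since \(\Psi\in C^2(\mathbb R_+^2)\), I extend it to the closed quadrant by continuity. The set of times at which \(X\) sits on the axes contributes to the \(ds\)-integral only through \((\mathcal L-r)\Psi\) evaluated there, and the continuous extension of the inequality covers this.

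Using \(\mathcal L\Psi\le\lambda\Psi+K\), the drift satisfies \((\mathcal L-r)\Psi\le -(r-\lambda)\Psi+K\). I localize with \(\tau_n:=\inf\{t:|X_t|\ge n\}\), which is finite-valued and tends to \(\infty\) by non-explosion under (A1) and (A3). On \([0,t\wedge\tau_n]\) the integrand of \(M\) is bounded, so \(\mathbb E_x[M_{t\wedge\tau_n}]=0\). Taking expectations gives
\[
   \mathbb E_x\!\big[e^{-r(t\wedge\tau_n)}\Psi(X_{t\wedge\tau_n})\big]
   +(r-\lambda)\,\mathbb E_x\!\int_0^{t\wedge\tau_n}e^{-rs}\Psi(X_s)\,ds
   \le \Psi(x)+\frac{K}{r}.
\]
Both terms on the left are nonnegative because \(\Psi\ge1\) and \(\lambda<r\). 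Hence each of them is bounded by \(\Psi(x)+K/r\), uniformly in \(n\) and \(t\).

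Next I let \(n\to\infty\). Fatou's lemma on the first term gives \(\sup_{t}\mathbb E_x[e^{-rt}\Psi(X_t)]\le\Psi(x)+K/r\). Monotone convergence on the second term, followed by \(t\to\infty\), gives
\[
   \mathbb E_x\!\int_0^\infty e^{-rs}\Psi(X_s)\,ds\le \frac{\Psi(x)+K/r}{r-\lambda}.
\]
The comparison \(G^++c\le C_\Psi\Psi\) then transfers all three bounds to \(c\) and \(G^+\), up to the factor \(C_\Psi\). This yields the three claimed finiteness statements.

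The main obstacle is the first step, namely justifying the reflected It\^o formula with vanishing boundary terms at the corner. There both \(L^1\) and \(L^2\) may increase. However, both derivatives vanish at \((0,0)\), because \((0,0)\) lies in each face. Therefore the same support argument applies face by face, and no extra corner term arises. I will state this explicitly and otherwise treat the formula as given by Remark~\ref{rem:reflected-ito-boundary-term}.
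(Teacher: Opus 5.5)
Your proposal is correct and follows essentially the paper's proof. Both apply the reflected It\^o formula to $e^{-rt}\Psi(X_t)$, use the Neumann condition to remove the local-time terms, and localize. Both then use $(\mathcal L-r)\Psi\le-(r-\lambda)\Psi+K$ to bound both nonnegative terms by $\Psi(x)+K/r$, pass to the limit with Fatou/monotone convergence, and transfer the bounds via $G^++c\le C_\Psi\Psi$.

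Your extra remarks make explicit two details the paper leaves implicit: extending the generator inequality to the axes by continuity, and the corner lying in both closed faces. The only slip is calling $\tau_n$ finite-valued. That need not hold in general and is never used, since only $t\wedge\tau_n$ and $\tau_n\to\infty$ enter.
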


\begin{proof}
Let \((\rho_n)\) be a localizing sequence such that
\(\rho_n\uparrow\infty\) and all stopped terms below are integrable. Apply
It\^o's formula with reflection to \(e^{-r(t\wedge\rho_n)}\Psi(X_{t\wedge\rho_n})\).
Since \(dL^i\) is carried by \(\{X^i=0\}\) and
\(\partial_i\Psi=0\) on \(\{x_i=0\}\), the reflection term vanishes. Thus
\[
   e^{-r(t\wedge\rho_n)}\Psi(X_{t\wedge\rho_n})
   =
   \Psi(x)
   +
   \int_0^{t\wedge\rho_n}
      e^{-rs}(\mathcal L\Psi-r\Psi)(X_s)\,ds
   +
   M_{t\wedge\rho_n},
\]
where \(M\) is a local martingale. Taking expectations and using
\(\mathcal L\Psi-r\Psi\le-(r-\lambda)\Psi+K\) gives
\[
   \mathbb E_x\!\left[
      e^{-r(t\wedge\rho_n)}\Psi(X_{t\wedge\rho_n})
   \right]
   +(r-\lambda)
   \mathbb E_x\!\left[
      \int_0^{t\wedge\rho_n}e^{-rs}\Psi(X_s)\,ds
   \right]
   \le
   \Psi(x)+K\mathbb E_x\!\left[
      \int_0^{t\wedge\rho_n}e^{-rs}\,ds
   \right].
\]
Since \(\int_0^{t\wedge\rho_n}e^{-rs}\,ds\le r^{-1}\), we obtain the uniform
bound
\[
   (r-\lambda)
   \mathbb E_x\!\left[
      \int_0^{t\wedge\rho_n}e^{-rs}\Psi(X_s)\,ds
   \right]
   \le
   \Psi(x)+\frac{K}{r}.
\]
Letting \(n\to\infty\) and then \(t\to\infty\), Fatou's lemma gives
\[
   \mathbb E_x\!\left[
      \int_0^\infty e^{-rs}\Psi(X_s)\,ds
   \right]
   \le
   \frac{\Psi(x)+K/r}{r-\lambda}
   <\infty .
\]
Therefore, since \(G^++c\le C_\Psi\Psi\),
\[
   \mathbb E_x\!\left[\int_0^\infty e^{-rs}c(X_s)\,ds\right]
   \le
   C_\Psi
   \mathbb E_x\!\left[\int_0^\infty e^{-rs}\Psi(X_s)\,ds\right]
   <\infty,
\]
and similarly
\[
   \mathbb E_x\!\left[\int_0^\infty e^{-rs}G(X_s)^+\,ds\right]
   \le
   C_\Psi
   \mathbb E_x\!\left[\int_0^\infty e^{-rs}\Psi(X_s)\,ds\right]
   <\infty.
\]

It remains to control the terminal discounted positive reward. Taking
expectations in the stopped It\^o formula before dropping the nonnegative
terminal term yields
\[
   \mathbb E_x\!\left[
      e^{-r(t\wedge\rho_n)}\Psi(X_{t\wedge\rho_n})
   \right]
   \le
   \Psi(x)+K\int_0^t e^{-rs}\,ds
   \le
   \Psi(x)+\frac{K}{r}.
\]
Letting \(n\to\infty\) and using Fatou's lemma gives
   $\mathbb E_x\!\left[e^{-rt}\Psi(X_t)\right]
   \le
   \Psi(x)+\frac{K}{r}$, $t\ge0$.
Since \(G^+\le C_\Psi\Psi\), we conclude that
\[
   \sup_{t\ge0}\mathbb E_x\!\left[e^{-rt}G(X_t)^+\right]
   \le
   C_\Psi\left(\Psi(x)+\frac{K}{r}\right)<\infty.
\]
The proof is complete.
\end{proof}

\begin{example}[Quadratic Lyapunov function]
\label{ex:quadratic-lyapunov}
A convenient checkable choice in \(\mathbb R_+^2\) is
   $\Psi(x):=1+x_1^2+x_2^2=1+|x|^2$.
This function satisfies the reflected Neumann condition because
\[
   \partial_1\Psi(x)=2x_1=0\quad\text{on }\{x_1=0\},
   \qquad
   \partial_2\Psi(x)=2x_2=0\quad\text{on }\{x_2=0\}.
\]
Moreover,
   $\partial_i\Psi(x)=2x_i$, 
   $\partial_{ij}\Psi(x)=2\delta_{ij}$,
and therefore the interior generator gives
\begin{equation}
\label{eq:quadratic-lyapunov-generator}
\begin{aligned}
   \mathcal L\Psi(x)
   &=
   \sum_{i=1}^2\mu_i(x)\partial_i\Psi(x)
   +
   \frac12\sum_{i,j=1}^2a_{ij}(x)\partial_{ij}\Psi(x)       \\
   &=
   2\mu_1(x)x_1+2\mu_2(x)x_2+\frac12\sum_{i,j=1}^2a_{ij}(x)\,2\delta_{ij} \\
   &=
   2\mu(x)\cdot x+\operatorname{tr}a(x).
\end{aligned}
\end{equation}
Hence the abstract Lyapunov condition in
Lemma~\ref{lem:lyapunov-integrability} is satisfied if, for some
\(K\ge0\) and \(\lambda<r\),
\begin{equation}
\label{eq:quadratic-lyapunov-drift}
   2\mu(x)\cdot x+\operatorname{tr}a(x)
   \le
   \lambda(1+|x|^2)+K,
   \qquad x\in\mathbb R_{++}^2 .
\end{equation}
Indeed, by \eqref{eq:quadratic-lyapunov-generator},
   $\mathcal L\Psi(x)
   \le
   \lambda\Psi(x)+K$, $\Psi(x)=1+|x|^2$.
Since the boundary derivatives vanish, the reflected It\^o formula for
\(e^{-rt}\Psi(X_t)\) has no local-time contribution. After localization,
\[
   e^{-r(t\wedge\rho_n)}\Psi(X_{t\wedge\rho_n})
   =
   \Psi(x)
   +
   \int_0^{t\wedge\rho_n}
      e^{-rs}(\mathcal L\Psi-r\Psi)(X_s)\,ds
   +
   M_{t\wedge\rho_n},
\]
and \eqref{eq:quadratic-lyapunov-drift} gives
   $\mathcal L\Psi-r\Psi
   \le
   - (r-\lambda)\Psi+K$.
Taking expectations yields
\[
   \mathbb E_x\!\left[
      e^{-r(t\wedge\rho_n)}\Psi(X_{t\wedge\rho_n})
   \right]
   +(r-\lambda)
   \mathbb E_x\!\left[
      \int_0^{t\wedge\rho_n}e^{-rs}\Psi(X_s)\,ds
   \right]
   \le
   \Psi(x)+\frac{K}{r}.
\]
Letting \(n\to\infty\) and then \(t\to\infty\) gives
\[
   \mathbb E_x\!\left[
      \int_0^\infty e^{-rs}(1+|X_s|^2)\,ds
   \right]
   \le
   \frac{1+|x|^2+K/r}{r-\lambda}
   <\infty .
\]
This quadratic estimate controls the max payoff. Since
\[
   G(x)=x_1\vee\alpha x_2
   \le
   x_1+\alpha x_2
   \le
   (1+\alpha)|x|
   \le
   \frac{1+\alpha}{2}(1+|x|^2),
\]
we obtain
\[
   \mathbb E_x\!\left[
      \int_0^\infty e^{-rs}G(X_s)^+\,ds
   \right]
   \le
   \frac{1+\alpha}{2}
   \mathbb E_x\!\left[
      \int_0^\infty e^{-rs}(1+|X_s|^2)\,ds
   \right]
   <\infty .
\]
Thus \eqref{eq:quadratic-lyapunov-drift} is a simple sufficient condition for
discounted reward integrability. If the running cost satisfies, for some
\(C_c>0\), $0\le c(x)\le C_c(1+|x|^2)$,
then the same estimate gives
\[
   \mathbb E_x\!\left[
      \int_0^\infty e^{-rs}c(X_s)\,ds
   \right]
   \le
   C_c
   \mathbb E_x\!\left[
      \int_0^\infty e^{-rs}(1+|X_s|^2)\,ds
   \right]
   <\infty .
\]
Consequently, under the quadratic drift bound
\eqref{eq:quadratic-lyapunov-drift} and quadratic growth of \(c\), both the
discounted reward and the discounted running-cost terms in
\eqref{eq:value-function} are integrable.
\end{example}

\begin{remark}[Use in the main text]
\label{rem:lyapunov-use-main}
Lemma~\ref{lem:lyapunov-integrability} is only a sufficient condition. The
main results require integrability and uniform integrability for the relevant
stopped payoff, cost, and additive-functional terms; these may also be verified
by other model-specific estimates. The Lyapunov condition above is included as
a convenient checkable criterion.
\end{remark}
\subsection{Reflection and corner terms}
\label{sec:appendix-corner}

This appendix records the technical condition under which the reflection term
in the It\^o--Tanaka formula for \(G(x_1,x_2)=x_1\vee\alpha x_2\) produces no
additional additive functional supported at the corner \((0,0)\). In the main
text we use the following convention: the reflection local time does not
generate an additional corner measure at \((0,0)\). The sufficient
approximation condition below makes this convention explicit.

Let
   $F_1^\circ:=\{(0,x_2):x_2>0\}$,
   $F_2^\circ:=\{(x_1,0):x_1>0\}$,
and recall that \(dL^1\) is carried by \(\{x_1=0\}\), while \(dL^2\) is
carried by \(\{x_2=0\}\). Since \(G=\alpha x_2\) on \(F_1^\circ\) and
\(G=x_1\) on \(F_2^\circ\), one formally expects
   $\partial_1G=0$ on $F_1^\circ$,
   $\partial_2G=0$ on $F_2^\circ$.
The only possible obstruction is the nonsmooth corner \((0,0)\).

\begin{assumption}[No corner reflection contribution]
\label{ass:no-corner-reflection}
For every \(x\in\mathbb R_+^2\), every \(t>0\), and \(i=1,2\), the corner part
of the boundary local time is negligible:
\begin{equation}
\label{eq:no-corner-local-time-direct}
   \mathbb E_x\!\left[
      \int_0^t \mathbf 1_{\{X_s=(0,0)\}}\,dL_s^i
   \right]=0 .
\end{equation}
Equivalently, \(L^i\) does not charge the corner, in expectation, on compact
time intervals. A sufficient approximation condition for
\eqref{eq:no-corner-local-time-direct} to remove all reflection terms from the
It\^o--Tanaka formula for \(G\) is the following: there exists
\(G_\varepsilon\in C^2(\mathbb R_+^2)\), \(\varepsilon>0\), such that
\(G_\varepsilon\to G\) locally uniformly on \(\mathbb R_+^2\),
   $\partial_1G_\varepsilon\to0$
locally uniformly on $F_1^\circ$,
   $\partial_2G_\varepsilon\to0$
locally uniformly on $F_2^\circ$,
and, for every \(x\in\mathbb R_+^2\) and \(t>0\),
\begin{equation}
\label{eq:no-corner-condition}
   \lim_{\varepsilon\downarrow0}
   \mathbb E_x\!\left[
      \int_0^t |\partial_1G_\varepsilon(X_s)|
      \mathbf 1_{\{X_s=(0,0)\}}\,dL_s^1
      +
      \int_0^t |\partial_2G_\varepsilon(X_s)|
      \mathbf 1_{\{X_s=(0,0)\}}\,dL_s^2
   \right]=0 .
\end{equation}
\end{assumption}

\begin{lemma}[Vanishing reflection contribution for \(G\)]
\label{lem:vanishing-reflection-G}
Assume Assumption~\ref{ass:no-corner-reflection}. Then the reflection
contribution in the generalized It\^o--Tanaka formula for
\(G(X)=X^1\vee\alpha X^2\) vanishes. More precisely, for every
\(x\in\mathbb R_+^2\), every \(t>0\), and every approximating family
\((G_\varepsilon)\) satisfying Assumption~\ref{ass:no-corner-reflection},
\[
   \lim_{\varepsilon\downarrow0}
   \mathbb E_x\!\left[
      \sum_{i=1}^2
      \int_0^t \partial_iG_\varepsilon(X_s)\,dL_s^i
   \right]=0 .
\]
Consequently, no additional corner additive functional appears in the
It\^o--Tanaka formula for \(G(X)\). In particular, after localization,
\[
   e^{-rt}G(X_t)
   =
   G(x)
   +\int_0^t e^{-rs}c(X_s)\,ds
   -\int_0^t e^{-rs}\,dA_s^\Gamma
   +M_t,
\]
where \(\Gamma=\Gamma^{\rm ac}\,dx+\Gamma^\Delta\) is the stopping-gain measure
defined in \eqref{eq:Gamma-decomposition}, and \(M\) is a local martingale.
\end{lemma}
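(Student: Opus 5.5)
The plan is to work with the smooth approximants $G_\varepsilon$ from Assumption~\ref{ass:no-corner-reflection} and split each reflection integral according to where the local time is carried. Since $dL^i$ is carried by $\{X^i=0\}$, for $i=1$ the face $\{x_1=0\}$ is the disjoint union of $F_1^\circ$ and the corner $\{(0,0)\}$. Hence
\[
   \int_0^t \partial_1G_\varepsilon(X_s)\,dL_s^1
   =
   \int_0^t \partial_1G_\varepsilon(X_s)\mathbf 1_{\{X_s\in F_1^\circ\}}\,dL_s^1
   +
   \int_0^t \partial_1G_\varepsilon(X_s)\mathbf 1_{\{X_s=(0,0)\}}\,dL_s^1 ,
\]
and similarly for $i=2$ with $F_2^\circ$. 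The corner pieces tend to zero in expectation directly by \eqref{eq:no-corner-condition}. They even vanish identically in expectation by \eqref{eq:no-corner-local-time-direct}, provided $\partial_iG_\varepsilon$ is bounded near the origin.

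For the face pieces, I would localize. Let $\rho_R$ be the exit time of $X$ from $[0,R]^2$ and work on $[0,t\wedge\rho_R]$, so that $X$ stays in a compact set. Then $|\partial_1G_\varepsilon|\mathbf 1_{F_1^\circ}$ along the path is bounded by $\sup_{F_1^\circ\cap[0,R]^2}|\partial_1G_\varepsilon|$, and this tends to $0$ by the local uniform convergence. The face integral is therefore bounded by this supremum times $\mathbb E_x[L^1_{t\wedge\rho_R}]$. That expectation is finite: from the SDE, $L^1_t = X^1_t - x_1 - \int\mu_1\,ds - \int\sigma_1\,dW$, and under the stopped localization each term is integrable by linear growth (A1). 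So the face contribution vanishes as $\varepsilon\downarrow0$.

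One point needs care. Local uniform convergence on the \emph{open} face $F_1^\circ$ does not control $\partial_1G_\varepsilon$ near the corner. The clean fix is to restrict the face integral to the set where $X_s\in F_1^\circ$ and $X_s^2\ge\delta$. On $\{0<X^2_s<\delta\}$ I would bound the contribution by $\sup|\partial_1G_\varepsilon|$ times $\mathbb E_x[\int_0^t\mathbf 1_{\{0<X_s^2<\delta\}}\,dL^1_s]$. By monotone convergence this tends, as $\delta\downarrow0$, to the corner mass, which is zero by \eqref{eq:no-corner-local-time-direct}. This step requires a bound on $\partial_iG_\varepsilon$ uniform in $\varepsilon$ on compacts, for instance from mollification of a Lipschitz $G$ with $|\nabla G|\le1+\alpha$. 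I regard this as the main obstacle: the interchange of the limits in $\delta$ and $\varepsilon$ is where the argument is most fragile, and I would state the uniform gradient bound explicitly, as it holds for the standard mollifier choice.

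With the reflection terms removed, I would pass to the limit in the Itô formula for $e^{-rt}G_\varepsilon(X_t)$. The left side and $G_\varepsilon(x)$ converge by local uniform convergence. The interior drift term $\int e^{-rs}(\mathcal L-r)G_\varepsilon(X_s)\,ds$ converges to $\int e^{-rs}\,d(A^{\mu_1 \text{ or } \alpha\mu_2}) + \tfrac12\int e^{-rs}\,dL^0(Y)$, by the Tanaka computation in the proof of Theorem~\ref{thm:diagonal-singular-measure}. Using $G=(x_1+\alpha x_2+|Y|)/2$, this identifies the limiting finite-variation term as $\int e^{-rs}c\,ds-\int e^{-rs}\,dA^\Gamma_s$, with $\Gamma=\Gamma^{\rm ac}dx+\Gamma^\Delta$. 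The stochastic integrals converge in $L^2$ on $[0,t\wedge\rho_R]$ by dominated convergence of $\nabla G_\varepsilon\sigma$. Finally, letting $R\to\infty$ yields the stated identity with $M$ a local martingale.
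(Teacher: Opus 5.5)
Your proposal follows essentially the same route as the paper's proof: localize at $\rho_R$, split each reflection integral into an open-face piece and a corner piece, and kill the face piece on $\{\delta\le X^2\le R\}$ by local uniform convergence. You then control the strip $\{0<X^2<\delta\}$ by a bound on $\partial_iG_\varepsilon$ uniform in $\varepsilon$, dispose of the corner piece by the no-corner condition, and let $R\to\infty$. The uniform gradient bound you flag is exactly the ``assumed local boundedness of the approximating derivatives'' that the paper invokes even though it is not part of Assumption~\ref{ass:no-corner-reflection}, so you are right to state it explicitly (and the strip contribution in fact tends to $0$ outright, since the strip excludes $X^2=0$).
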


\begin{proof}
Let \(R>0\) and define the localization time
   $\rho_R:=\inf\{t\ge0:|X_t|\ge R\}$.
It is enough to prove the assertion on \([0,t\wedge\rho_R]\) and then let
\(R\uparrow\infty\). Since \(dL^1\) is carried by \(\{X^1=0\}\), decompose
\[
   \int_0^{t\wedge\rho_R}\partial_1G_\varepsilon(X_s)\,dL_s^1
   =
   \int_0^{t\wedge\rho_R}
      \partial_1G_\varepsilon(X_s)\mathbf 1_{\{X_s^1=0,\ X_s^2>0\}}\,dL_s^1
   +
   \int_0^{t\wedge\rho_R}
      \partial_1G_\varepsilon(X_s)\mathbf 1_{\{X_s=(0,0)\}}\,dL_s^1 .
\]
On the compact part of the open face
   $F_{1,R}^\circ:=\{(0,x_2):0<x_2\le R\}$,
the convergence \(\partial_1G_\varepsilon\to0\) is uniform away from the corner
after excluding a small neighborhood of \(x_2=0\). More explicitly, for
\(\delta\in(0,R)\), $\sup_{\delta\le x_2\le R}
   |\partial_1G_\varepsilon(0,x_2)|\longrightarrow0$.
Hence
\[
   \lim_{\varepsilon\downarrow0}
   \mathbb E_x\!\left[
      \int_0^{t\wedge\rho_R}
      |\partial_1G_\varepsilon(X_s)|
      \mathbf 1_{\{X_s^1=0,\ \delta\le X_s^2\le R\}}\,dL_s^1
   \right]=0.
\]
The remaining part with \(0<X_s^2<\delta\) is absorbed into the corner
localization by first letting \(\varepsilon\downarrow0\) and then
\(\delta\downarrow0\), using the assumed local boundedness of the approximating
derivatives on compact sets and the no-corner condition
\eqref{eq:no-corner-condition}. The term supported exactly on
\(\{X_s=(0,0)\}\) vanishes by \eqref{eq:no-corner-condition}. Therefore
\[
   \lim_{\varepsilon\downarrow0}
   \mathbb E_x\!\left[
      \int_0^{t\wedge\rho_R}
      \partial_1G_\varepsilon(X_s)\,dL_s^1
   \right]=0 .
\]
The same argument on the second face, using that \(dL^2\) is carried by
\(\{X^2=0\}\) and that
\(\partial_2G_\varepsilon\to0\) locally uniformly on \(F_2^\circ\), gives
\[
   \lim_{\varepsilon\downarrow0}
   \mathbb E_x\!\left[
      \int_0^{t\wedge\rho_R}
      \partial_2G_\varepsilon(X_s)\,dL_s^2
   \right]=0 .
\]
Combining the two estimates yields
\[
   \lim_{\varepsilon\downarrow0}
   \mathbb E_x\!\left[
      \sum_{i=1}^2
      \int_0^{t\wedge\rho_R}
      \partial_iG_\varepsilon(X_s)\,dL_s^i
   \right]=0 .
\]
Finally, letting \(R\uparrow\infty\) and using the localization and
integrability assumptions for the reflected local times gives the desired
identity on \([0,t]\).

Thus the reflection term does not contribute to the generalized
It\^o--Tanaka formula for \(G(X)\). The only singular interior term is the
diagonal local-time contribution already encoded in
\(\Gamma^\Delta(dx)=-(q(x)/(2\sqrt{1+\alpha^2}))\,\sigma_\Delta(dx)\). This
proves the lemma.
\end{proof}

\begin{remark}[If the corner condition fails]
\label{rem:corner-measure-warning}
If Assumption~\ref{ass:no-corner-reflection} is not verified, then the
It\^o--Tanaka formula for \(G(X)\) may contain an additional additive
functional supported at the corner. In that case the stopping-gain measure must
be enlarged to
   $\Gamma
   =
   \Gamma^{\rm ac}\,dx+\Gamma^\Delta+\Gamma^{\rm corner}$,
where \(\Gamma^{\rm corner}\) is the signed measure associated with the corner
reflection contribution. The killed-resolvent representation would then read
   $V(x)
   =
   G(x)
   -
   R_r^{\mathcal C}
   \big(\Gamma^{\rm ac}\,dx+\Gamma^\Delta+\Gamma^{\rm corner}\big)(x)$,
provided the enlarged signed measure is admissible. The present paper works
under Assumption~\ref{ass:no-corner-reflection}; hence
\(\Gamma^{\rm corner}=0\) throughout the main text.
\end{remark}

\begin{remark}[How this appendix is used]
\label{rem:corner-use-main}
In the proof of the singular-measure formula, the diagonal local time of
\(Y(X)=X^1-\alpha X^2\) generates \(\Gamma^\Delta\). The reflection local times
\(L^1\) and \(L^2\) are boundary objects and could, in principle, create an
additional term at the nonsmooth corner. Assumption~\ref{ass:no-corner-reflection}
rules out precisely this extra term, allowing the main text to use the simpler
decomposition \(\Gamma=\Gamma^{\rm ac}\,dx+\Gamma^\Delta\).
\end{remark}
\subsection{Signed additive functionals and killed potentials}
\label{sec:appendix-signed-potentials}

This appendix fixes the convention for signed measures, signed additive
functionals, and killed potentials. The point is purely technical but important:
the stopping-gain object \(\Gamma=c+rG-\mathcal LG\) is a signed measure, so
every expression involving \(A^\Gamma\) or \(R_r^{\mathcal C}\Gamma\) must be
understood through its positive and negative parts.

Let \(X\) be the reflected diffusion from Section~\ref{sec:model}, and let
\(\nu\) be a signed Radon measure on \(\mathbb R_{++}^2\). We write its Jordan
decomposition as
   $\nu=\nu^+-\nu^-$,
   $|\nu|:=\nu^++\nu^-$,
  $\nu^+\perp\nu^-$.
We call \(\nu\) an admissible signed smooth measure for \(X\) if
\(\nu^+\) and \(\nu^-\) are smooth measures for the reflected diffusion and if
the corresponding positive continuous additive functionals
\(A^{\nu^+}\) and \(A^{\nu^-}\) exist. The signed continuous additive
functional associated with \(\nu\) is then defined by
   $A^\nu:=A^{\nu^+}-A^{\nu^-}$.
Thus, for a bounded predictable process \(\Phi\), the signed integral is
understood as
\[
   \int_0^t \Phi_s\,dA_s^\nu
   :=
   \int_0^t \Phi_s\,dA_s^{\nu^+}
   -
   \int_0^t \Phi_s\,dA_s^{\nu^-},
\]
whenever both terms on the right-hand side are finite. In particular,
\[
   \int_0^t e^{-rs}\,dA_s^\nu
   =
   \int_0^t e^{-rs}\,dA_s^{\nu^+}
   -
   \int_0^t e^{-rs}\,dA_s^{\nu^-}.
\]

Let \(\mathcal D\subseteq\mathbb R_+^2\) be a closed stopping set,
\(\mathcal C:=\mathbb R_+^2\setminus\mathcal D\), and
\(\tau_{\mathcal D}:=\inf\{t\ge0:X_t\in\mathcal D\}\). For a positive smooth
measure \(\eta\), define its killed \(r\)-potential by
\[
   R_r^{\mathcal C}\eta(x)
   :=
   \mathbb E_x\!\left[
      \int_0^{\tau_{\mathcal D}}e^{-rs}\,dA_s^\eta
   \right],
   \qquad x\in\mathbb R_+^2 .
\]
For an admissible signed smooth measure \(\nu=\nu^+-\nu^-\), we say that
\(R_r^{\mathcal C}\nu(x)\) is well defined if $R_r^{\mathcal C}\nu^+(x)+R_r^{\mathcal C}\nu^-(x)<\infty$.
In that case we define
\begin{equation}
\label{eq:signed-killed-potential}
   R_r^{\mathcal C}\nu(x)
   :=
   R_r^{\mathcal C}\nu^+(x)-R_r^{\mathcal C}\nu^-(x)
   =
   \mathbb E_x\!\left[
      \int_0^{\tau_{\mathcal D}}e^{-rs}\,dA_s^\nu
   \right].
\end{equation}
The identity in \eqref{eq:signed-killed-potential} is therefore not a formal
subtraction of two possibly infinite quantities; it is part of the admissibility
requirement that both positive killed potentials are finite.

If \(\nu=f\,dx\) with \(f=f^+-f^-\) and
\[
   \mathbb E_x\!\left[
      \int_0^{\tau_{\mathcal D}}e^{-rs}|f(X_s)|\,ds
   \right]<\infty,
\]
then the above definition reduces to the usual killed resolvent
\[
   R_r^{\mathcal C}f(x)
   =
   \mathbb E_x\!\left[
      \int_0^{\tau_{\mathcal D}}e^{-rs}f(X_s)\,ds
   \right].
\]
If \(\nu\) is a surface measure, for example
\(\nu=\psi\,\sigma_\Delta\) on
\(\Delta=\{x_1=\alpha x_2\}\), then \(A^\nu\) is interpreted as the continuous
additive functional whose Revuz measure is \(\psi\,\sigma_\Delta\), provided
\(\psi^+\sigma_\Delta\) and \(\psi^-\sigma_\Delta\) are smooth measures and
their killed potentials are finite. In the stopping-gain decomposition,
\[
   \Gamma=\Gamma^{\rm ac}\,dx+\Gamma^\Delta,
   \qquad
   \Gamma^\Delta(dx)
   =
   -\frac{q(x)}{2\sqrt{1+\alpha^2}}\,\sigma_\Delta(dx),
\]
the corresponding signed functional is therefore 
$A^\Gamma = A^{\Gamma^{\rm ac}\,dx} + A^{\Gamma^\Delta}$,
with the second term understood through the Jordan decomposition of
\(\Gamma^\Delta\). Since \(\Gamma^\Delta\le0\), one may equivalently write
\[
   A^{\Gamma^\Delta}
   =
   -A^{|\Gamma^\Delta|},
   \qquad
   |\Gamma^\Delta|(dx)
   =
   \frac{q(x)}{2\sqrt{1+\alpha^2}}\,\sigma_\Delta(dx).
\]

The unrestricted reflected resolvent of a positive smooth measure \(\eta\) is
\[
   R_r^{\rm R}\eta(x)
   :=
   \mathbb E_x\!\left[
      \int_0^\infty e^{-rs}\,dA_s^\eta
   \right],
\]
and the signed version is defined by the same Jordan-decomposition convention
whenever \(R_r^{\rm R}\nu^+(x)+R_r^{\rm R}\nu^-(x)<\infty\). In general,
   $R_r^{\mathcal C}\nu(x)
   \ne
   R_r^{\rm R}(\nu\mathbf 1_{\mathcal C})(x)$,
because \(R_r^{\mathcal C}\) kills the process at \(\tau_{\mathcal D}\),
whereas \(R_r^{\rm R}\) continues the reflected process after
\(\tau_{\mathcal D}\) and may count later returns to \(\mathcal C\).

For a candidate epigraph
\[
   \mathcal D_h:=\{(x_1,x_2)\in\mathbb R_+^2:x_2\ge h(x_1)\},
   \qquad
   \mathcal C_h:=\mathbb R_+^2\setminus\mathcal D_h,
   \qquad
   \tau_h:=\inf\{t\ge0:X_t\in\mathcal D_h\},
\]
the corresponding killed potential is
\[
   R_r^{\mathcal C_h}\nu(x)
   :=
   R_r^{\mathcal C_h}\nu^+(x)-R_r^{\mathcal C_h}\nu^-(x)
   =
   \mathbb E_x\!\left[
      \int_0^{\tau_h}e^{-rs}\,dA_s^\nu
   \right],
\]
again provided
\(R_r^{\mathcal C_h}\nu^+(x)+R_r^{\mathcal C_h}\nu^-(x)<\infty\). Therefore the
candidate value $U_h(x):=G(x)-R_r^{\mathcal C_h}\Gamma(x)$ is well defined exactly when the positive and negative killed potentials of \(\Gamma\) relative to \(\mathcal C_h\) are both finite.

\begin{remark}[Use in the main text]
\label{rem:signed-potentials-use-main}
All appearances of \(A^\Gamma\), \(R_r^{\mathcal C}\Gamma\), and
\(R_r^{\mathcal C_h}\Gamma\) in the main results use the convention of this
appendix. Thus the formula
   $V=G-R_r^{\mathcal C}\Gamma$,
   $U_h=G-R_r^{\mathcal C_h}\Gamma$
are meaningful only under the finiteness condition for the positive and
negative killed potentials. This prevents an ill-defined subtraction
\(\infty-\infty\).
\end{remark}

\subsection{Constant-coefficient reflected Brownian example}
\label{sec:appendix-constant-coefficient-rbm}
This short computation shows that the diagonal singular term is not a
pathological artifact of variable coefficients. It already appears for a
constant-coefficient normally reflected Brownian motion in \(\mathbb R_+^2\).
Let
\[
   dX_t=\mu\,dt+\Sigma\,dW_t+dL_t,
   \qquad
   \mu=(\mu_1,\mu_2)\in\mathbb R^2,
   \qquad
   a=\Sigma\Sigma^\top
   =
   \begin{pmatrix}
      a_{11} & a_{12}\\
      a_{12} & a_{22}
   \end{pmatrix}.
\]
Then the interior generator is
\[
   \mathcal L f(x)
   =
   \mu_1\partial_1f(x)+\mu_2\partial_2f(x)
   +
   \frac12
   \left(
      a_{11}\partial_{11}f(x)
      +2a_{12}\partial_{12}f(x)
      +a_{22}\partial_{22}f(x)
   \right).
\]
For the max payoff \(G(x_1,x_2)=x_1\vee\alpha x_2\), set
   $Y(x):=x_1-\alpha x_2$,
   $n:=\nabla Y=(1,-\alpha)$.
The quadratic variation density of \(Y(X)\) is constant:
\[
   q:=n^\top an
   =
   (1,-\alpha)
   \begin{pmatrix}
      a_{11} & a_{12}\\
      a_{12} & a_{22}
   \end{pmatrix}
   \binom{1}{-\alpha}
   =
   a_{11}-2\alpha a_{12}+\alpha^2a_{22}.
\]
Equivalently,
   $d\langle Y(X)\rangle_t=q\,dt$.
On the two smooth regions
   $\mathcal R_1:=\{x_1>\alpha x_2\}$,
   $\mathcal R_2:=\{x_1<\alpha x_2\}$,
the payoff is affine. Hence
\[
   G=x_1,\qquad
   \mathcal LG=\mu_1,
   \qquad
   \Gamma=c+rG-\mathcal LG=c(x)+rx_1-\mu_1
   \quad\text{on }\mathcal R_1,
\]
whereas
\[
   G=\alpha x_2,\qquad
   \mathcal LG=\alpha\mu_2,
   \qquad
   \Gamma=c+rG-\mathcal LG=c(x)+r\alpha x_2-\alpha\mu_2
   \quad\text{on }\mathcal R_2.
\]
Thus the absolutely continuous part is
\[
   \Gamma^{\rm ac}(x)
   =
   \begin{cases}
      c(x)+rx_1-\mu_1, & x_1>\alpha x_2,\\[1mm]
      c(x)+r\alpha x_2-\alpha\mu_2, & x_1<\alpha x_2.
   \end{cases}
\]
The diagonal singular component is obtained from the same Tanaka--coarea
calculation as in Theorem~\ref{thm:diagonal-singular-measure}. Since \(q\) is
constant here,
\[
   \Gamma^\Delta(dx)
   =
   -\frac{q}{2\sqrt{1+\alpha^2}}\,\sigma_\Delta(dx)
   =
   -\frac{a_{11}-2\alpha a_{12}+\alpha^2a_{22}}
      {2\sqrt{1+\alpha^2}}\,\sigma_\Delta(dx).
\]
Consequently, the full stopping-gain measure is
\[
   \Gamma
   =
   \Gamma^{\rm ac}\,dx
   -
   \frac{a_{11}-2\alpha a_{12}+\alpha^2a_{22}}
      {2\sqrt{1+\alpha^2}}\,\sigma_\Delta(dx).
\]
In the simplest isotropic case \(a=I_2\), one has $q=n^\top I_2n=1+\alpha^2$,
and therefore
\[
   \Gamma^\Delta(dx)
   =
   -\frac{1+\alpha^2}{2\sqrt{1+\alpha^2}}\,\sigma_\Delta(dx)
   =
   -\frac{\sqrt{1+\alpha^2}}{2}\,\sigma_\Delta(dx).
\]
Using the parametrization \(z=(\alpha y,y)\), \(y\ge0\), of
\(\Delta\), we have
\[
   dz=(\alpha,1)\,dy,
   \qquad
   d\sigma_\Delta(dz)=|(\alpha,1)|\,dy
   =
   \sqrt{1+\alpha^2}\,dy.
\]
Thus, in the isotropic case,
\[
   \Gamma^\Delta(dz)
   =
   -\frac{\sqrt{1+\alpha^2}}{2}\,d\sigma_\Delta(dz)
   =
   -\frac{1+\alpha^2}{2}\,dy,
   \qquad z=(\alpha y,y).
\]
More generally, for constant covariance \(a\),
\[
   \Gamma^\Delta(dz)
   =
   -\frac{q}{2\sqrt{1+\alpha^2}}\,d\sigma_\Delta(dz)
   =
   -\frac{q}{2}\,dy,
   \qquad
   q=a_{11}-2\alpha a_{12}+\alpha^2a_{22},
   \qquad
   z=(\alpha y,y).
\]
This example shows that the negative diagonal surface measure is present even
for constant drift and constant covariance. In particular, for \(a=I_2\) it is
\[
   \Gamma^\Delta(dz)=-\frac{1+\alpha^2}{2}\,dy,
   \qquad z=(\alpha y,y),
\]
so the singular term is not a technical byproduct of variable coefficients; it
is the intrinsic distributional second derivative of the max payoff
\(x_1\vee\alpha x_2\).

\bibliography{reference}

\end{document}